\documentclass[reqno]{amsart}
\usepackage{graphicx, amssymb, color,pdfsync}
\usepackage[all,cmtip]{xy}
\numberwithin{equation}{section}
\usepackage{mathrsfs}

\usepackage{tikz}
\usetikzlibrary{matrix,arrows}

\usepackage{hyperref}
\hypersetup{
  colorlinks   = true, %Colours links instead of ugly boxes
  urlcolor     = blue, %Colour for external hyperlinks
  linkcolor    = blue, %Colour of internal links
  citecolor   = red %Colour of citations
}

\newcommand{\Stab}{\mathop{\rm Stab}\nolimits} 

\newtheorem{theo}{Theorem} 
\newtheorem{prop}[theo]{Proposition}

\newtheorem{lemm}[theo]{Lemma}

\theoremstyle{remark}
\newtheorem{rema}[theo]{\bf Remark}

\title[A sufficiently complicated noded Schottky group of rank three]{A sufficiently complicated noded Schottky group of rank three}

\author{Rub\'en A. Hidalgo} 
\thanks{Partially supported by Projects Fondecyt 1150003 and Anillo ACT1415 PIA-CONICYT} 
\keywords{Riemann surfaces; Schottky groups} 
\subjclass[2010]{30F40, 30F10}
\address{Departamento de Matem\'atica y Estad\'{\i}stica, Universidad de La Frontera, Temuco, Chile} 
\email{ruben.hidalgo@ufrontera.cl}

\begin{document}

\begin{abstract}
The theoretical existence of non-classical Schottky groups is due to Marden and explicit examples are only known in rank two; the first one by Yamamoto in 1991 and later by Williams in 2009. In 2006, Maskit and the author provided a theoretical method to construct non-classical Schottky groups in any rank. The method assumes the knowledge of certain algebraic limits of Schottky groups, called sufficiently complicated noded Schottky groups. We provide explicitly a sufficiently complicated noded Schottky group of rank three and explain how to use it to construct explicit non-classical Schottky groups. 

\end{abstract}

\maketitle

%%%%%%%%%%%%%%%
A {\it Schottky group of rank $g \geq 1$} is a group $G$ generated by (necessarily loxodromic) M\"obius transformations  $A_{1}$,..., $A_{g}$, so that there is  a collection of $2g$ pairwise disjoint simple loops $\alpha_{1}, \alpha'_{1}, \alpha_{2}, \alpha'_{2}, \ldots,\alpha_{g}, \alpha'_{g}$ on the Riemann sphere $\widehat{\mathbb C}$, all of them bounding a common domain ${\mathcal D}$ of connectivity $2g$, with $A_{j}(\alpha_{j})=\alpha'_{g}$ and $A_{j}({\mathcal D}) \cap {\mathcal D}=\emptyset$. The above set of generators and set of loops are called, respectively, {\it geometrical generators} and a {\it fundamental set of loops} for $G$. The open set ${\mathcal D}$ turns out to be a fundamental domain for $G$, its region of discontinuity $\Omega$ is a connected set  and $\Omega/G$ is a closed Riemann surface of genus $g$. Koebe's uniformization theorem asserts that every closed Riemann surface can be obtained (up to conformal isomorphisms) in that way. Chuckrow \cite{Chuckrow} proved that every set of $g$ generators of a Schottky group of rank $g$ is geometrical and Maskit \cite{Maskit:Schottky} proved that a Schottky group of rank $g$ may equivalently be defined as a purely loxodromic Kleinian group with non-empty region of discontinuity and isomorphic to a free group of rank $g$. 

Schottky groups were firstly considered by Schottky around 1882  by using as a fundamental set of loops a collection of Euclidean circles; these are now called {\it classical Schottky groups} and the corresponding set of generators a {\it classical set of generators}.  In general, a classical Schottky group may have non-classical set of generators. Examples of classical Schottky groups are, for instance, finitely generated purely hyperbolic Fuchsian groups representing a closed surface with holes \cite{Button} and, in the special case the Fuchsian group is a two generator group representing a torus with one hole, this is classical on every set of generators \cite{P}. 

A conjecture (it seems due to Bers) is that every closed Riemann surface can be obtained from a suitable classical Schottky group. Some positive answers were obtained by Bobenko \cite{Bobenko}, Koebe \cite{Koebe}, Maskit \cite{Maskit:conj}, Sepp\"al\"a \cite{Seppala} (for the case the surface admits antiholomorphic involutions with fixed points) and McMullen \cite{McMullen} (in the case the surface has sufficiently shorts geodesics). Recently, Hou \cite{Hou1,Hou2} have announced a proof of this conjecture (by using Haussdorf dimension of the limit set of Kleinian groups) and another approach in \cite{Hidalgo:classical} (by using Belyi curves).

In 1974, Marden \cite{Marden} provided a theoretical (non-constructive) existence of non-classical Schottky groups. In 1975, Zarrow \cite{Zarrow} claimed to have constructed an example of a non-classical Schottky group of rank two, but it was lately noted to be incorrect by Sato in \cite{Sato}. The first explicit (correct) construction was provided by Yamamoto \cite{Yamamoto} in 1991 and later by Williams in his Ph.D. Thesis \cite{Williams} in 2009, both for  rank two. It seems that, for rank at least three, there is not explicit example in the literature. 

In \cite{H-M}, Maskit and the author described a theoretical method to construct non-classical Schottky groups in any rank $g \geq 2$. The idea is to consider certain Kleinian groups, obtained as geometrically finite algebraic limits of Schottky groups of rank $g$, called {\it noded Schottky groups of rank $g$}. These groups can be geometrically defined as follows. Consider a collection of pairwise disjoint open topological discs $D_{1}, D'_{1},\ldots, D_{g}, D'_{g}$ on the Riemann sphere $\widehat{\mathbb C}$ so that the corresponding boundaries $\widehat{\alpha}_{1}=\partial D_{1}, \widehat{\alpha}'_{1}=\partial D'_{1}, \ldots, \widehat{\alpha}_{g}=\partial D_{g}, \widehat{\alpha}'_{g}=\partial D'_{g}$
are simple loops and they only intersect in at most finitely many points. 
Assume $\widehat{A}_{1},\ldots, \widehat{A}_{g}$ are M\"obius transformations so that  $\widehat{A}_{j}(\widehat{\alpha}_{j})=\widehat{\alpha}'_{j}$ and
$\widehat{A}_{j}(D_{j})\cap D'_{j}=\emptyset$, for each $j=1,\ldots,g$. Observe that the transformation $\widehat{A}_{j}$ may only be loxodromic or parabolic. The group $\widehat{G}$, generated by these transformations, is a Kleinian group isomorphic to a free group of rank $g$. If $p$ is a point of intersection of two of the above loops, then either it is a fixed point of a parabolic transformation of $\widehat{G}$ or it has trivial $\widehat{G}$-stabilizer. In the second situation, one may deform in a suitable manner these loops in order to avoid the intersection at $p$ and not adding extra intersections.  In this way, two possibilities appear (up to performing the above deformation), either (i) $G$ is a Schottky group of rank $g$ or (ii) there are intersection points, each of them being a 
 fixed point of a parabolic transformation of $\widehat{G}$. In case (ii) we say that $\widehat{G}$ is a {\it noded Schottky group of rank $g$}; we call the above set of loops {\it a set of defining loops} and the generators a set of {\it geometrical generators}. In \cite{bm:combiv}, as an application of the Klein-Maskit's combination theorems,  it was noted that $\widehat{G}$ is  geometrically finite, that each of its parabolic elements is a conjugate of a power of one of the transformations fixing a common point of two of the defining loops, and that the complement $\widehat{\mathcal D}$ of the union of the closures of $D_{1}$, $D'_{1}$,..., $D_{g}$, $D'_{g}$ is a fundamental domain for $\widehat{G}$. Unfortunately, different as was for the case of Schottky groups, not every set of free generators of a noded Schottky group is necessarily geometrical.  

In \cite{bm:free} is was observed that a geometrically finite Kleinian group, with non-empty region of discontinuity, isomorphic to a free group is either a Schottky group or a  noded Schottky group (that every geometrically finite Kleinian group isomorphic to a free group has non-empty region of discontinuity was observed in \cite{Hidalgo:NodedSchottky}). In particular, a Schottky group (respectively, a noded Schottky group) may equivalently be defined as a geometrically finite Kleinian group isomorphic to a free group without (respectively, containing) parabolic elements.

If $\Omega$ is the region of discontinuity of a noded Schottky group $\widehat{G}$ of rank $g$, then by adding to it the parabolic fixed points of $\widehat G$, and with the appropriate cusped topology (see \cite{Hidalgo:NodedFuchsian,K-M:pinched}), we obtain its {\it extended region of discontinuity} $\Omega^+$; it happens that  $\Omega^+/\widehat G$ is a stable Riemann surface of genus $g$. It was observed in \cite{Hidalgo:NodedSchottky} that every stable Riemann surface of genus $g$ is obtained as above; so every point of the Deligne-Mumford compactification of the moduli space of genus $g$ can be realized by a suitable noded Schottky group of rank $g$. 

A noded Schottky group for which there is a set of geometrical generators admitting a set of defining loops all of which are Euclidean circles is called {\it neoclassical}; the set of generators is a {\it neoclassical set of generators}.  With respect to the classical Schottky uniformization conjecture at the level of stable Riemann surfaces, it was proved in \cite{H-M} that every noded Schottky group of rank three providing a stable Riemann surface as in Figure \ref{figura0} cannot be neoclassical (this should be still true for every noded Schottky group of rank $g \geq 4$ whose corresponding stable Riemann surface has $g+1$ components, one being of genus zero and the others being of genus one). 

In the same paper  it was introduced the concept for a noded Schottky group to be {\it sufficiently complicated}  (we recall such a technical definition in Section \ref{Sec:complicado} as well some results from \cite{H-M}) and it was proved that Schottky groups which are sufficiently near (i.e., in a conical neighbourhood; see Section \ref{Sec:complicado}) to a sufficiently complicated noded Schottky group is necessarily non-classical. In this way, in order to produce infinitely many non-classical Schottky groups of a given rank, we just need to construct explicitly a sufficiently complicated noded Schottky group of such rank. In \cite{H-M} it was seen that a noded Schottky group that uniformizes a stable Riemann surface of genus three as shown in Figure \ref{figura1} is sufficiently complicated (this should still be true for a noded Schottky group of rank $g \geq 4$ whose corresponding stable Riemann surface has one component has genus zero and others $g$ being genus one noded surfaces). In Section \ref{Sec:construccion} we construct such an explicit (sufficiently complicated) noded Schottky group of rank $3$ by finding explicit generators and we explain how to use this to produce infinitely many non-classical Schottky groups of rank three.

\begin{figure}[!tbp]
  \centering
  \begin{minipage}[b]{0.49\textwidth}
  \centering
    \includegraphics[width=3.0cm]{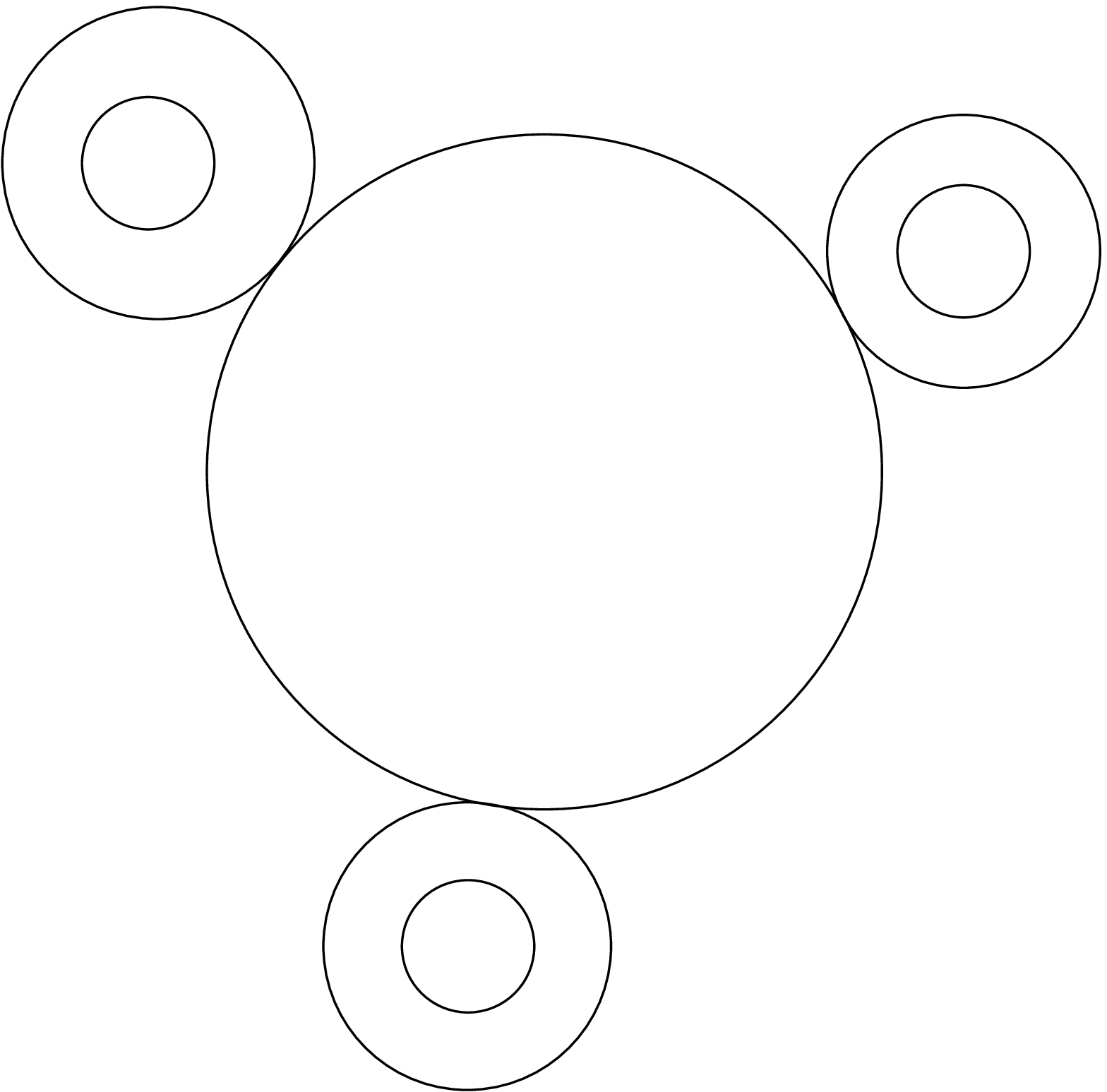}
    \caption{A stable Riemann surface of genus $3$ with $3$ nodes}
    \label{figura0}
  \end{minipage}
  \hfill
  \begin{minipage}[b]{0.49\textwidth}
  \centering
    \includegraphics[width=3.0cm]{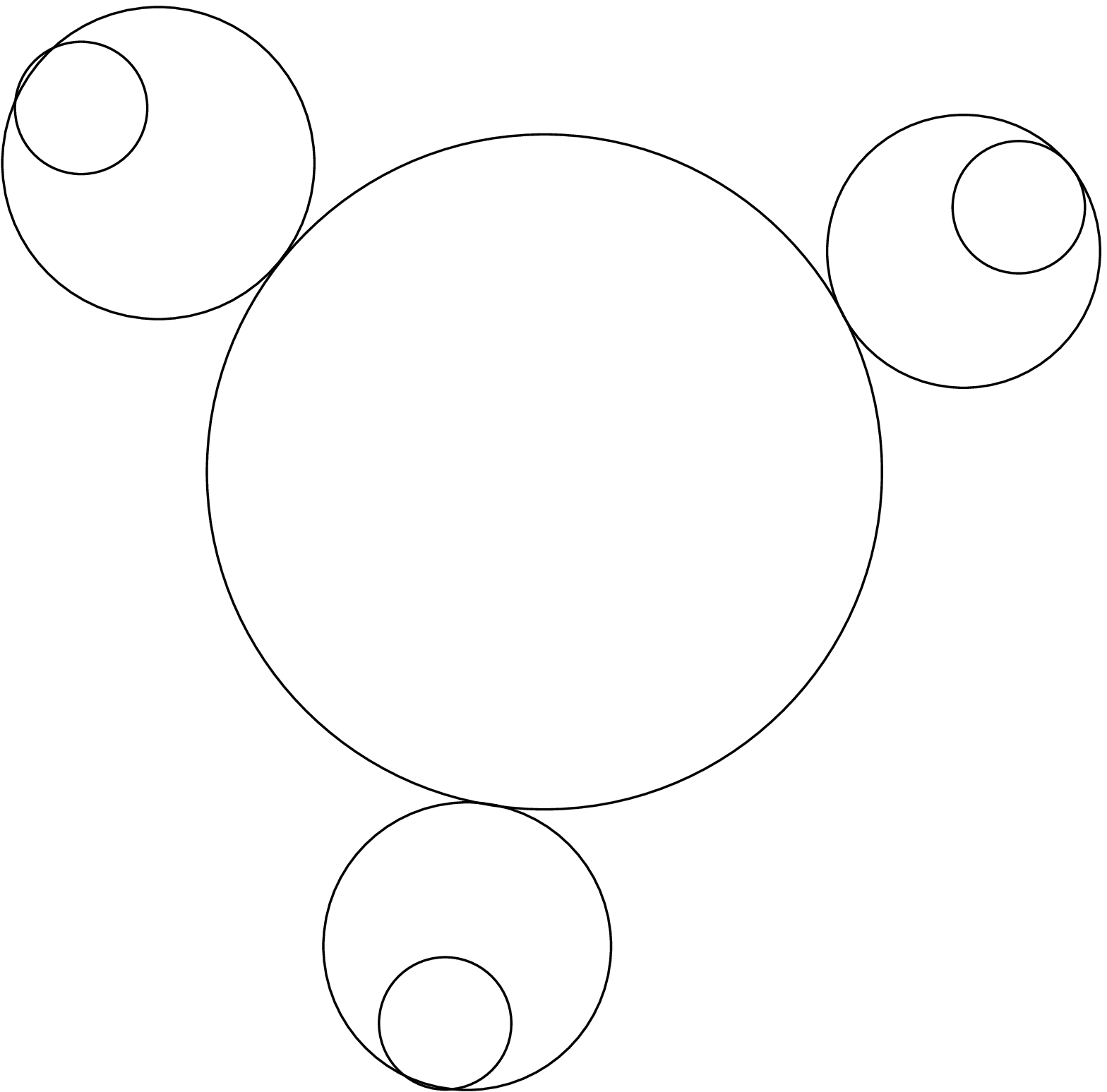}
    \caption{A stable Riemann surface of genus $3$ with $6$ nodes}
    \label{figura1}
  \end{minipage}
\end{figure}

%%%%%%%%%%%%%%%%%%%%%%%%%%
%%%%%%%%%%%%%%%%%%%%%%%%%%
\section{Sufficiently Complicated Noded Schottky Groups} \label{Sec:complicado}
In this section we recall the definition of sufficiently complicated noded Schottky groups and some of results from \cite{H-M}. The space of deformations of a Schottky group of rank $g$, denoted by ${\mathcal S}_{alg}$, is a subset of the representation space of the free group of rank $g$ in ${\rm PSL}_{2}({\mathbb C})$, modulo
conjugation. Regard ${\mathbb H}^{3}$ as being the set $\{(z,t):z\in{\mathbb C},
t>0\in{\mathbb R}\}$. We likewise identify ${\mathbb C}$ with the boundary of ${\mathbb H}^{3}$, except for the point at infinity; that is, we identify ${\mathbb C}$ with $\{(z,t):t=0\}$.

%%%%%%%%%%%%%%%%%%%
\subsection{The relative conical neighbourhood of a noded Schottky group}\label{sec:vertical}
Let $\widehat{G}$ be a noded Schottky group of rank $g \geq 2$, with a set of geometrical generators $\widehat{A}_{1}, \ldots, \widehat{A}_{g}$, and corresponding fundamental system of loops $\widehat \alpha_1,\ldots,\widehat \alpha'_g$, these being the corresponding boundary loops of 
 a collection of pairwise disjoint open discs $D_{1},\ldots, D'_{g}$.
 The complement of the closures of these discs is a fundamental domain $\widehat D$ for $\widehat G$.  Let ${\widehat P}_1,\ldots,{\widehat P}_{q}$ be a maximal set of primitive parabolic elements of $\widehat G$ generating non-conjugate cyclic subgroups, where $q \geq 1$ (we may assume the fix point of these parabolic transformations are contained in the intersection of two fundamental loops). We denote by $\Omega(\widehat{G})$ its region of discontinuity and by $\Omega^{+}(\widehat{G})$ its extended region of discontinuity. 
Next, we proceed to recall a construction done in \cite{H-M} of a one-real family of Schottky groups $G^{\tau}$ whose geometric limit is $\widehat{G}$.

%%%%%%%%%%%%%%%%%%%
\subsubsection{The infinite shoebox construction}
For each $i=1,\ldots,q$, choose a particular M\"obius transformation $H_i$ conjugating $\widehat P_i$ to the transformation $P(z)=z+1$ and consider the renormalized group $H_i \widehat G H_i^{-1}$. For this group, there is a number $\tau_0>1$ so that the set $\{|\Im(z)|\ge\tau _0\}$ is precisely invariant under the stabilizer $\Stab(\infty)$ of $\infty$ in the group $H_i \widehat G H_i^{-1}$. In this normalization, for each parameter $\tau$, with $\tau >\tau _0$, we define the {\em infinite shoebox} to be the set $B_{0,\tau}=\{(z,t):|\Im(z)|\le \tau ,t\le\tau \}$. 
Since $\tau_0>1$, we easily observe that for every $\tau>\tau _0$, the complement of $B_{0,\tau}$ in ${\mathbb H}^{3}\cup{\mathbb C}$ is precisely invariant under $\Stab(\infty)\subset H_i \widehat G H_i^{-1}$, where we are now regarding M\"obius transformations as hyperbolic isometries, which act on the closure of ${\mathbb H}^{3}$. Then for $\widehat G$, the infinite shoebox with parameter $\tau$ at $z_i$, the fixed point of $\widehat P_i$, is $B_{i,\tau}=H_i^{-1}(B_{0,\tau})$. If $\widehat P$ is any parabolic element of $\widehat G$, conjugate to some power of $\widehat P_i$, then the corresponding infinite shoebox at the fixed point of $\widehat P$, is given by $T(B_{i,\tau })$, where $\widehat P=T \widehat P_i T^{-1}$.
It was observed in \cite{bm:free} that, for each fixed $\tau>\tau _0$, $\widehat G$ acts as a group of conformal homeomorphisms on the {\em expanded regular set} $B^{\tau}=\bigcap \widehat A(B_{i,\tau})$, where the intersection is taken over all $\widehat A \in \widehat G$ and all $i=1,\ldots,q$. Further, $\widehat G$ acts as a (topological) Schottky group (in the sense of our geometrical definition) on the boundary of $B^{\tau }$. 
Each parabolic $\widehat P \in\widehat G$ appears to have two fixed points on the boundary of $B^{\tau}$; that is, $\widehat P$, as it acts on the boundary of $B^{\tau}$, appears to be loxodromic. The {\em flat part} of $B^{\tau }$ is the intersection of $B^{\tau}$ with the extended complex plane $\widehat{\mathbb C}$. The complement of the flat part (on the boundary of $B^{\tau}$) is the disjoint union of 3-sided {\em boxes}, where each box has two {\em vertical sides}
(translates of the sets $\{\Im(z)=\pm \tau , 0<t<\tau \}$) and one {\em horoball side} (a translate of the set $\{|\Im(z)| \leq \tau , t=\tau \}$).  For each $i=1,...,q$ and for each $n=1,2,...$, we set $B_{i,\tau ,n}=H_{i}^{-1}(B_{0,\tau } \cap \{|\Re(z)|\leq n\})$ and  $B^{\tau,n}=\bigcap_{\widehat{A} \in \widehat{G}} \widehat{A}(B_{i,\tau,n})$.
The {\em truncated flat part} of $B^{\tau,n}$ is the intersection $B^{\tau ,n} \cap \widehat{\mathbb C}$. The boundary of the truncated flat part near a parabolic fixed point, renormalized so as to lie at $\infty$, is a Euclidean rectangle.
Let us renormalize $\widehat G$ so that $\infty \in  \Omega(\widehat G)$. Then, for each $\tau>\tau _0$, there is a conformal map $f^{\tau}$, mapping the boundary of $B^{\tau}$ to $\widehat{\mathbb C}$, and conjugating $\widehat G$ onto a Schottky group $G^{\tau }$, where $f^{\tau }$ is defined by the requirement that, near $\infty$, $f^{\tau}(z)=z+O(|z|^{-1})$. The group $G^{\tau}$ depends on the choice of the M\"obius transformations $H_1,\ldots, H_q$ as well as on the choice of $\tau$. The elements $A_{1}^{\tau}=f^{\tau} \widehat{A}_{1} (f^{\tau})^{-1}, \ldots,  A_{g}^{\tau}=f^{\tau} \widehat{A}_{g} (f^{\tau})^{-1}$ provides a set of free generators for the Schottky group $G^{\tau}$.

%%%%%%%%%%%%%%%%%%%
\subsubsection{Vertical projection loops} Next, we proceed to construct a fundamental set of loops for $G^{\tau}$ for the above generators in terms of the fundamental set of loops for $\widehat G$.
In \cite{bm:free} it was shown that, with the above normalization, $f^{\tau}$ converges to the identity $I$, uniformly on compact subsets of $\Omega(\widehat G)$, and, for each $j \in \{1,\ldots,g\}$,  $A_{j}^{\tau}$ converges to ${\widehat A}_{j}$, as $\tau \to \infty$. In particular, if we fix $\tau _0$, and fix $n$, then $f^{\tau }\to I$ uniformly on compact subsets of the truncated flat part of $B^{\tau_0,n}$.  The boundary of $B^{\tau _0,n}$ consists of a disjoint union of quadrilaterals
with circular sides. After renormalization, the part of the boundary of $B^{\tau _0,n}$ corresponding to $\{|\Im(z)|=\tau _0\}$ is the {\em horizontal} part of the boundary, while the part of the boundary corresponding to $\{|\Re(z)|=n\}$ is the {\em vertical} part of the boundary. We make a fixed a choice of the conjugating maps, $H_i$, $i=1,\ldots,q$, and we fix a choice of the parameter $\tau>\tau_0$ in the above construction. 
We may deform all the loops $\widehat \alpha_i$ and $\widehat \alpha'_i$, within  $\Omega^+(\widehat G)$ to an equivalent defining set of loops, with the same geometric generators $\widehat{A}_{1},\ldots,\widehat{A}_{g}$, so that, after appropriate renormalization, each connected  component of each of the deformed loops appears, in each component of the  complement of the flat part of $B^{\tau}$, as a pair of half-infinite  Euclidean vertical lines, one in $\{\Im(z)\ge \tau\} $, the other in $\{\Im(z)\le  -\tau\} $, both with the same real part (the technical details of such a deformation can be found in \cite{H-M}). We call such a deformed loops the {\it vertical projection loops}. These vertical projection loops, which we still denoting as $\widehat \alpha_1,\ldots,\widehat \alpha'_g$,  yields a set of fundamental loops, $\alpha_1^{\tau},\ldots,{\alpha'_{g}}^{\tau}$ for the generators $A_{1}^{\tau},\ldots,A_{g}^{\tau}$ of the Schottky group $G^{\tau}$.

%%%%%%%%%%%%%%%
\subsubsection{The relative conical neighborhood}
The {\em relative conical neighborhood} of $\widehat G$ is to be defined as the set of all marked Schottky groups $G^{\tau}=\langle A_{1}^{\tau},\ldots, A_{g}^{\tau}\rangle$, with the fundamental set of loops  $\alpha_1^{\tau},\ldots,{\alpha'_{g}}^{\tau}$, as constructed above.

\begin{rema}
Recall that we are assuming that $\infty$ is an interior point of the flat part corresponding to $\tau_0$, and  $f^\tau (z)=z+O(|z|^{-1})$ near $\infty$. As, with these normalizations, $f^\tau \to I$ uniformly on compact subsets of $\Omega(\widehat G)$, we obtain that  $G^\tau \to \widehat{G}$ algebraically. It now  follows, from the J{\o}rgensen-Marden criterion \cite{J-M:geoconv}, that $G^\tau \to \widehat{G}$ geometrically and that each relative conical neighborhood contains
infinitely many distinct marked Schottky groups. It is also easy to see, as in
\cite{bm:free}, that, for each primitive parabolic element $\widehat
P \in\widehat G$, as $\tau \to\infty$, there is a corresponding geodesic on
$S^\tau =\Omega(G^\tau )/G^\tau $ whose length tends to zero. It follows
that each relative conical neighborhood of a noded Schottky group contains Schottky
groups representing infinitely many distinct Riemann surfaces.
\end{rema}

%%%%%%%%%%%%%%%%%%%%%%
\subsection{Pinchable loops of Schottky groups}
Let $G$ be a Schottky group of rank $g \geq 2$, with generators $A_{1},\ldots,A_{g}$, and let $\pi:\Omega(G) \to S$ be a regular covering with deck group $G$. 

%%%%%%%%%%%%%%%
\subsubsection{Pinchable loops}
Let $\gamma_{1},\ldots,\gamma_{q}$ be a set of simple disjoint geodesics loops on $S$. Each $\gamma_{j}$ corresponds to a conjugacy class of a cyclic subgroup of $G$ (including the trivial subgroup) by the lifting under $\pi$; let $\langle W_{j} \rangle$ be a representing of such a class. If these $q$ cyclic subgroups are non-trivial, they are pairwise non-conjugated in $G$ and the generators $W_{j}$ are non-trivial powers in $G$ (i.e., there is no $T_{j} \in G$ so that $W_{j}=T_{j}^{m_{j}}$ for some $m_{j} \geq 2$), then we say that this set of geodesics is {\em pinchable} in $G$.                                                  

\begin{rema} 
(1) It was shown in \cite{bm:parelt} (see also Yamamoto \cite{yamamoto:parelt})  that if $\gamma_1,\ldots,\gamma_q$ is a set of pinchable simple disjoint geodesics loops on $S$ in $G$, defined by
the words $W_1,\ldots,W_q$, as above, then there is a noded Schottky group $\widehat G$, and there is an isomorphism $\psi:G\to \widehat G$, where $\psi(W_1),\ldots,\psi(W_q)$, and their powers and conjugates, are exactly the parabolic elements of $\widehat G$. More precisely, it was shown in \cite{bm:parelt} that there is a path in Schottky space, ${\mathcal S}_{alg}$, which converges to a set of generators for $\widehat G$, along which the lengths of the geodesics, $\gamma_1,\ldots,\gamma_q$, all tend to zero. 
(2) On the other direction, let us consider a noded Schottky group $\widehat G$ of rank $g \geq 2$, with a set of geometrical generators $\widehat{A}_{1},\ldots, \widehat{A}_{g}$ and corresponding fundamental set of loops $\widehat \alpha_1,\ldots,\widehat \alpha'_g$.
Let $G^{\tau}$ be a Schottky group of rank $g$ with fundamental set of loops  $\alpha_1^{\tau},\ldots,{\alpha'_{g}}^{\tau}$ and generators $A_{1}^{\tau},\ldots, A_{g}^{\tau}$, in a relative conical neighborhoodof $\widehat{G}$ as previously described in Section \ref{sec:vertical}. Let $S=\Omega(G^{\tau})/G^{\tau}$ be the closed Riemann surface of genus $g$ represented by $G^{\tau}$, and let $V_i \subset S$ be the projection of $\alpha_{i}^{\tau}$, $i=1,\ldots,g$. Then $V_1,\ldots,V_g$ is a set of $g$ homologically independent simple disjoint loops on $S$. 
Let $\psi:G^{\tau}\to\widehat G$ be the isomorphism defined by $A_{i}^{\tau}\mapsto \widehat A_i$, $i=1,\ldots,g$. The elements of $G^{\tau}$ which are sent to parabolic elements of $\widehat{G}$ are called the {\it pinched elements} of $G^{\tau}$. There are simple disjoint geodesics $\gamma_1,\ldots,\gamma_q$ on $S$, defined by pinched elements of $G^{\tau}$, given by the words $W_1,\ldots,W_q$ in the generators $A_{1}^{\tau},\ldots, A_{g}^{\tau}$, so that every parabolic element of $\widehat G$ is a power of a conjugate of one of their $\psi$-image. It happens that these collection of loops $\gamma_{1},\ldots,\gamma_{q}$ is a set of pinchable geodesics of $G^{\tau}$. The construction in \cite{bm:free} shows that we can choose the above parameter $\tau$ so that the $\gamma_{i}$ are all arbitrarily short. 
\end{rema}

The following asserts that any set of pinchable geodesics is contained in a maximal one.

\begin{prop}[\cite{H-M}]\label{prop:addpinch} 
Every non-empty set of $k<3g-3$ pinchable geodesics is contained in  a maximal set of $3g-3$ pinchable geodesics.
\end{prop}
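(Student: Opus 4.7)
My plan is to reduce the claim to the one-step extension: \emph{if $k<3g-3$, then any pinchable set $\gamma_{1},\ldots,\gamma_{k}$ admits one more pinchable geodesic $\gamma_{k+1}$ disjoint from and non-isotopic to all the given ones}. Since $3g-3$ is the maximum possible number of pairwise disjoint, pairwise non-isotopic, essential simple loops on a closed surface of genus $g\geq 2$ (realized by any pants decomposition), iterating this single-step extension $3g-3-k$ times yields the desired maximal pinchable set.

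To produce $\gamma_{k+1}$, by the results from \cite{bm:parelt} recalled in the remark on pinchability, the collection $\gamma_{1},\ldots,\gamma_{k}$ is realized as the full list of pinched classes of some noded Schottky group $\widehat{G}$, together with an isomorphism $\psi:G\to\widehat{G}$. The stable Riemann surface $\widehat{S}=\Omega^{+}(\widehat{G})/\widehat{G}$ has arithmetic genus $g$ and exactly $k$ nodes, whose normalization decomposes into components $C_{1},\ldots,C_{c}$ of genera $g_{i}$ with $n_{i}$ punctures at the node-preimages; these satisfy $\sum_{i}n_{i}=2k$ and the arithmetic-genus identity $\sum_{i}g_{i}=g+c-1-k$, whence
\[
\sum_{i=1}^{c}\bigl(3g_{i}-3+n_{i}\bigr)=3g-3-k,
\]
with each summand non-negative by stability and vanishing only on a thrice-punctured sphere. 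Under $k<3g-3$ this sum is positive, so some component $C_{i_{0}}$ is not thrice-punctured and thereby carries an essential, non-peripheral simple closed curve $\delta$. The image of $\delta$ under the natural identification of $\widehat{S}$ minus the nodes with $S$ minus the $\gamma_{j}$ gives a simple loop on $S$ whose geodesic representative $\gamma_{k+1}$ is disjoint from and non-isotopic to each $\gamma_{j}$.

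The pinchability of $\gamma_{k+1}$ requires three conditions on its representative word $W_{k+1}\in G$. Setting $w:=\psi(W_{k+1})$, the element $w$ lies in the stabilizer subgroup $H\leq\widehat{G}$ of $C_{i_{0}}$, is non-trivial since $\delta$ is essential, and is not parabolic since $\delta$ is non-peripheral; hence $W_{k+1}$ is non-trivial in $G$ and not conjugate to any power of any $W_{j}$. The substantive task is to show $W_{k+1}$ is primitive in $G$, equivalently that $w$ is primitive in $\widehat{G}$. I would handle this via the Klein--Maskit combination description of $\widehat{G}$ as an amalgamated tree of the component-stabilizers along the rank-one maximal parabolic subgroups at the nodes: on the corresponding Bass--Serre tree, any root of the (elliptic) element $w$ fixes the same vertex and therefore lies in $H$, while the simple non-peripheral curve $\delta$ represents a primitive class in $\pi_{1}(C_{i_{0}})$ and hence in the free representation $H$. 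This forces any such root to be $w$ itself, and $\psi^{-1}$ transports primitivity back to $G$.

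The main obstacle is exactly this primitivity step: in general, primitivity in a free subgroup does not pass to primitivity in the ambient free group, so the argument genuinely relies on the combination structure of $\widehat{G}$. Once this is established, iteration of the single-step extension completes the proof.
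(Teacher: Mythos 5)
The paper states this proposition without proof, citing \cite{H-M}, so there is no internal argument to compare with; judged on its own terms, your proposal has a genuine gap, and it sits one step before the primitivity issue you flag at the end. In verifying pinchability you treat topological properties of $\delta$ on the part $C_{i_0}$ as if they transferred to its image $w$ in the component stabilizer $H=\Stab(\Delta)\leq\widehat G$. But the natural map $\pi_1(C_{i_0}\setminus\{\mbox{nodes}\})\to H$ is not injective: its kernel contains $\pi_1(C_{i_0}\setminus\{\mbox{nodes}\})\cap N$, where $N$ is the kernel of $\pi_1(S)\to G$, and in particular it contains every defining loop $V_j$ that lies on that part. Concretely, take $g=2$, $k=1$ and $\gamma_1$ the geodesic representing $A_1$ (dual to $V_1$): the unique part is a twice-punctured torus on which $V_2$ is an essential, non-peripheral simple closed curve, yet $[V_2]$ maps to the identity in $G$, so the choice $\delta=V_2$ is admitted by your recipe and produces a non-pinchable extension. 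The same non-injectivity undermines ``$w$ is not parabolic since $\delta$ is non-peripheral'' and, decisively, ``$\delta$ represents a primitive class in $\pi_1(C_{i_0})$ and hence in $H$'': non-triviality, non-parabolicity and root-freeness must all be certified for the image in $H$, not for the curve upstairs, and that is exactly where the content of the proposition lies. (Your Bass--Serre argument for promoting root-freeness from $H$ up to $\widehat G$ is fine in outline, but it starts from a hypothesis you have not established.)

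What is missing is therefore a proof that a part with $3g_{i_0}-3+n_{i_0}>0$ carries a simple closed curve, disjoint from the nodes and from the $\gamma_j$, whose image in $H$ is non-trivial, non-parabolic and not a proper power; the count $\sum_i(3g_i-3+n_i)=3g-3-k>0$ (which is correct) only guarantees topological room for one more curve and says nothing about its image in the group. Any repair must select $\delta$ using the group data --- for instance by working directly with the planar regular covering $\Delta\to C_{i_0}\setminus\{\mbox{nodes}\}$ and the free group $H$, or by keeping track on $S$ of the defining loops $V_1,\dots,V_g$ while choosing the new curve --- and that selection is the substantive step of the argument in \cite{H-M}.
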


%%%%%%%%%%%%%%%%%
\subsubsection{Valid sets of defining loops and their complexity} 
Let  $\gamma_1,\ldots,\gamma_q \subset S$ be a pinchable set of geodesics in $G$.
Set $\widehat S^+$ the  stable Riemann surface obtained from $S$ by pinching these $q$ geodesics;  it consists of a finite number of compact Riemann surfaces,  called {\em parts}, which are joined together at a  finite number of nodes. Also, let $\widehat G$ be the noded Schottky group obtained 
from $G$ by pinching these $q$ geodesics.

Let $V_1,\ldots,V_g$, be a set of defining loops for $G$ on $S$ (that is, the components of the lifting of these loops under $\pi$ are simple loops and such a lifting set of loops contains a fundamental set of loops for $G$) and let $\widehat V_1,\ldots,\widehat V_g$ be the corresponding loops on $\widehat S^+$ obtained by pinching 
$\gamma_1,\ldots,\gamma_q$. We observe that the lifts of the $\widehat V_i$ to  $\Omega^{+}(\widehat{G})$ are all loops, but they are generally not disjoint and they need not to be simple. There are certainly some number of these lifts passing through each parabolic fixed point, and  some of them  might pass more than once through the same parabolic fixed point. The set of  loops, $V_1,\ldots,V_g$, is called a {\em valid set of defining loops} for  $\gamma_1,\ldots,\gamma_q$, if every lift of every $\widehat V_i$ to $\Omega^{+}(\widehat{G})$ is a  simple loop; that is, it passes at most once through each parabolic fixed point (i.e., the set of loops, $\widehat  V_1,\ldots,\widehat V_g$, forms a set of defining loops for $\widehat G$ on $\widehat S^+$).
We note that there are exactly $q$ equivalence classes of parabolic fixed points in $\widehat G$, one for each of the loops $\gamma_i$.

\begin{prop}[\cite{Hidalgo:NodedSchottky,H-M}]\label{existe1}
There is at least  one valid set of defining loops $V_1,\ldots,V_g$, for every set of pinchable  geodesics, $\gamma_1,\ldots,\gamma_q$. 
\end{prop}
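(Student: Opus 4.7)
The plan is to start with an arbitrary defining set of loops $V_1^0,\ldots,V_g^0$ for $G$ on $S$ (existence is automatic from the Schottky structure; one may take the projections of the boundary loops $\alpha_1,\ldots,\alpha'_g$ of the fundamental domain ${\mathcal D}$), and then improve it by elementary moves until the resulting system descends to a valid one on $\widehat S^+$. As a preparatory step, I would put these loops in minimal transverse position with respect to the pinchable geodesics $\gamma_1,\ldots,\gamma_q$, so that every geometric intersection number $|V_i^0\cap\gamma_j|$ is minimal in its free homotopy class. In this position the lifts of the $V_i^0$ to $\Omega(G)$ meet only finitely many preimages of the $\gamma_j$'s, and after pinching those crossings become precisely the passages of the lifts of $\widehat V_i^0$ through parabolic fixed points of $\widehat G$.

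I would then measure the failure of validity by the complexity function
\[
c(V_1,\ldots,V_g)=\#\bigl\{\text{pairs of passages of a single lift component of some }\widehat V_i\text{ through a common parabolic fixed point}\bigr\},
\]
so that validity is equivalent to $c=0$. Suppose $c>0$: then some lift component of $\widehat V_i$ passes through the fixed point $p$ of a parabolic element $P\in\widehat G$ at least twice. Because $P$ generates $\Stab(p)$ in $\widehat G$, this double passage corresponds downstairs on $S$ to two successive crossings of $V_i$ with the geodesic $\gamma_j$ (the one pinched to the $\widehat G$-orbit of $p$) joined along $V_i$ by a subarc whose associated conjugacy class is a non-trivial power of $W_j$. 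Equivalently, when the free group $G$ is expressed via the Bass--Serre decomposition induced by the pinchable system $\{\gamma_j\}$, the generator $A_i$ representing $V_i$ has a non-reduced syllable of the form $\cdots W_j^k\cdots$. I would then apply the Nielsen transformation cancelling this syllable, which realises geometrically as a band sum of $V_i$ along a subarc of $\gamma_j$ that, together with a subarc of $V_i$, bounds a bigon on $S$. The resulting system $V_1',\ldots,V_g'$ has strictly fewer intersections with $\bigcup_j\gamma_j$, hence strictly smaller $c$, and iteration must terminate at a valid system.

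Two verifications complete the argument. First, the surgery preserves the defining-loop property: algebraically this is clear because Nielsen transformations send free generating sets to free generating sets, and geometrically the move is a handle slide, which carries cut systems to cut systems; remaining simplicity and mutual disjointness are ensured by performing the surgery on an innermost bigon. Second, the total count $\sum_{i,j}|V_i\cap\gamma_j|$ is a non-negative integer bounding $c$ from above, and each bigon removal strictly decreases it, guaranteeing termination. The main obstacle I anticipate is precisely this geometric realisation step: one must show that the algebraic reduction (whose existence whenever $c>0$ comes from normal-form theory in the Bass--Serre decomposition) corresponds to an \emph{embedded} bigon on $S$ after a possible isotopy preserving minimal position, and that the band sum can be carried out without introducing new crossings with the other defining loops $V_k$ ($k\ne i$) or with the remaining geodesics $\gamma_l$. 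This requires an innermost-disc argument in $\Omega(G)$, exploiting simplicity and mutual disjointness of all the $\gamma_j$ and $V_k$, to extract a bigon whose interior is disjoint from all other relevant curves, after which the band sum acts on the chosen $V_i$ alone.
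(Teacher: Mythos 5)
There is a genuine gap, and it sits exactly where you flagged it: the reduction step. The failure of validity is \emph{not} detected by bigons on $S$. Validity fails when a single component $\tilde V_i$ of $\pi^{-1}(V_i)$ in the intermediate (planar) cover $\Omega(G)$ meets a single component $\tilde\gamma_j$ of $\pi^{-1}(\gamma_j)$ in two points; equivalently, the arc of $V_i$ between two crossings of $\gamma_j$ closes up, together with an arc of $\gamma_j$, into a loop representing a power $W_j^{k}$ with $k$ possibly nonzero. When $k\neq 0$ the two intersection points come from \emph{different} pairs of lifts in the universal cover (each such pair meeting exactly once), so no bigon, embedded or immersed, exists on $S$ --- indeed you have already placed everything in minimal position, which kills all bigons while leaving these configurations intact. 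Consequently the ``innermost bigon'' you propose to surger does not exist precisely in the cases that matter, and your termination measure $\sum_{i,j}|V_i\cap\gamma_j|$ cannot strictly decrease, since it is already minimal over the homotopy classes you started with. The move actually required --- deleting the syllable $W_j^{k}$, i.e.\ replacing $A_i=uv$ by $uW_j^{-k}v=(uW_j^{-k}u^{-1})A_i$ --- is multiplication by a conjugate of $W_j^{-k}$, which is not in general a Nielsen transformation of the generating set, changes the isotopy class of $V_i$ rather than sliding it across a disc, and comes with none of the simplicity/disjointness guarantees you would get from an innermost-disc argument. So the inductive scheme does not close.

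For comparison, the proposition is not proved in this paper; it is quoted from \cite{Hidalgo:NodedSchottky,H-M}, where the argument is direct rather than inductive. Since $\gamma_1,\ldots,\gamma_q$ is pinchable, Maskit's pinching theorem \cite{bm:parelt} produces a noded Schottky group $\widehat G$ and an isomorphism $\psi:G\to\widehat G$ sending the $W_j$ to parabolics; by \cite{bm:free,Hidalgo:NodedSchottky}, $\widehat G$ carries a set of defining loops $\widehat\alpha_1,\ldots,\widehat\alpha'_g$ bounding pairwise disjoint discs whose translates meet only at parabolic fixed points. The projections of these loops to $\widehat S^+$ automatically have the property that every lift is simple, and transporting them back to $S$ through the deformation (e.g.\ the relative conical neighbourhood construction of Section~\ref{sec:vertical}) produces a valid set of defining loops for $\gamma_1,\ldots,\gamma_q$. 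In other words, the valid system is manufactured upstairs on the limit group, where validity holds by construction, instead of being extracted from an arbitrary system downstairs by surgery. If you want to salvage an improvement-by-surgery proof, you would need a reduction move adapted to the $k\neq 0$ configurations (a cut-and-reglue along $\gamma_j$, not a bigon removal) together with a proof that it preserves the cut-system property and strictly decreases a correctly chosen complexity; that is substantially harder than the citation route.
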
 
%\begin{proof}
%Assume that $V_1,\ldots,V_g$ is not a valid set of defining loops for $\gamma_1,\ldots \gamma_q$. Then there is a lift of some $\widehat V_i$ that passes through a parabolic fixed point more than once. It follows that the corresponding loop $V_i$ must cross the corresponding geodesic $\gamma_j$ more than once. Further, one sees, by opening up the parabolic fixed point into a hyperbolic transformation and looking at the lift of the corresponding $\gamma_j$ as being a short path between the fixed points, that, in this case, there are necessarily two points of intersection of $V_i$ with $\gamma_j$, and that there is an arc of $\gamma_j$ between these two points, so that, not only does $V_i$ lift to a loop in $\Omega(G)$, but both loops formed by cutting and pasting $V_i$ with this arc of $\gamma_j$ lift to loops in $\Omega(G)$. This observation permits to obtain the desired fact.
%\end{proof}

%%%%%%%%%%%%%%%%%%%%%%%%
\subsubsection{The complexity} 
Let us now consider a valid set of defining loops, $V_1,\ldots,V_g$, for a set of pinchable geodesics $\gamma_{1},\ldots, \gamma_{q}$. We can deform the $V_i$ on $S$ so that they are all geodesics. Then  the geometric intersection number, $V_i\bullet \gamma_j$, of $V_i$ with $\gamma_j$ is  well defined; it is the number of points of intersection of these two  geodesics. Looking on the corresponding noded surface $\widehat S^+$,  $V_i\bullet \gamma_j$ is the number of times the curve $\widehat V_i$ obtained from $V_i$ by  contracting $\gamma_j$ to a point, passes through that point (node). The {\em complexity} of $V_1,\ldots,V_g$, with respect to $\gamma_{1},\ldots,\gamma_{q}$, is given by 
$$\Xi(\gamma_{1},\ldots,\gamma_{q};V_{1},\ldots,V_{g})=\max_{1 \leq j \leq q}\sum_{i=1}^g V_{i}\bullet \gamma_{j},$$ 
and the {\em complexity} $\Xi(\gamma_1,\ldots,\gamma_q)$ is the minimum of $\Xi(\gamma_{1},\ldots,\gamma_{q};V_{1},\ldots,V_{g})$, 
where the minimum is taken over all valid sets of defining loops.
If $\Xi(\gamma_1,\ldots,\gamma_q)\ge n$, then,  for every valid defining set $V_1,\ldots,V_g$, there is a node  $P$ on $S^+$ so that the total number of crossings of $P$ by $\widehat  V_1,\ldots,\widehat V_g$ is at least $n$.

\begin{prop}[\cite{H-M}]\label{topologico}
Let $g \geq 2$ and $G$ be a Schottky group of rank $g$.
For each positive integers $n$ there are only finitely many topologically distinct maximal (i.e. $q=3g-3$) pinchable set of geodesics in $G$
and complexity $n$. In particular, there are infinitely many topologically distinct maximal noded Schottky groups 
of rank $g$ and there are only finitely many topologically distinct maximal neoclassical noded Schottky groups in each rank $g$. 
\end{prop}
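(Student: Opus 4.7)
\emph{Finiteness for bounded complexity.} The idea is to convert the geometric complexity bound into a word-length bound. Given a maximal pinchable set $\gamma_1,\ldots,\gamma_{3g-3}$ of complexity $\leq n$ with a valid set of defining loops $V_1,\ldots,V_g$ realizing this minimum, I would lift the $V_i$ to $\Omega(G)$ to get Schottky generators $A_1,\ldots,A_g$ of $G$. The standard Schottky correspondence then identifies the cyclically reduced length of the word $W_j \in F_g$ representing $\gamma_j$ with $\sum_{i=1}^g V_i\bullet\gamma_j$, obtained by tracking which fundamental loop each lift of $\gamma_j$ crosses while passing between $G$-translates of the fundamental domain. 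Hence $|W_j|\leq n$, so each $W_j$ lies in the finite set of cyclically reduced words of length $\leq n$ in $F_g$; forming $(3g-3)$-tuples and modding out by the finite ambiguity in the choice of valid defining loops leaves only finitely many topological types.

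\emph{Infinitely many maximal noded Schottky groups.} By the previous part, it suffices to produce pinchable sets of unbounded minimal complexity. My plan is to start from any maximal pinchable set with valid defining loops $V_1,\ldots,V_g$, choose a simple closed curve $\delta\subset S$ that crosses transversely both some $V_i$ and some $\gamma_j$ and whose Dehn twist $T_\delta$ lies outside the handlebody (Schottky) mapping class group, and consider the iterated Dehn-twisted sets $T_\delta^k(\gamma_j)$. Any valid defining set for $T_\delta^k(\{\gamma_j\})$ in $G$ arises as $\{h(V_i)\}$ for some $h$ in the handlebody mapping class group, so the minimal complexity reduces to $\min_h \max_j \sum_i V_i\bullet h^{-1}T_\delta^k(\gamma_j)$. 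The classical estimate $|T_\delta^k(\alpha)\bullet\beta|\geq |k|\,|\alpha\bullet\delta|\,|\delta\bullet\beta|-C$, together with the fact that $T_\delta^k$ cannot be undone by any handlebody class, forces this minimum to grow with $k$. Part~1 then produces infinitely many topological types.

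\emph{Finiteness for neoclassical.} Here the plan is to bound the complexity directly from the circle arrangement. A neoclassical maximal noded Schottky group has $2g$ defining loops that are Euclidean circles $\widehat\alpha_1,\widehat\alpha'_1,\ldots,\widehat\alpha_g,\widehat\alpha'_g$ in $\widehat{\mathbb C}$. Since any two distinct circles meet in at most two points, the arrangement has at most $2\binom{2g}{2}=2g(2g-1)$ intersection points, each a parabolic fixed point. Using the projections of the $\widehat\alpha_i$ on $\widehat S^+$ as a valid defining set, each passage of $\widehat V_i$ through a given node corresponds to a $\widehat G$-translate of a defining circle through the corresponding parabolic fixed point, and the number of such translates (modulo the parabolic stabiliser) is bounded by a function of $g$, by applying the two-point-intersection rule to incidences among translates. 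Hence the complexity is bounded by an explicit function of $g$, and Part~1 yields the finiteness.

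The hard part is Part~2: the Dehn-twist intersection estimate gives a complexity lower bound only with respect to a \emph{fixed} defining set, whereas $\Xi$ is a minimum over \emph{all} valid defining sets. Ruling out that the twisting can be absorbed by rechoosing defining loops requires the quantitative statement that $T_\delta^k$, for large $k$, is far from the handlebody mapping class group in the intersection-number metric on pants decompositions. This relies on the non-trivial fact that the handlebody mapping class group is a proper subgroup of $\mathrm{Mod}(S)$ and on a careful analysis of its action on the pants decomposition complex.
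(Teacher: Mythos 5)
The paper gives no proof of Proposition \ref{topologico}; it is quoted from \cite{H-M}, so there is no in-text argument to compare against, only the machinery the paper sets up around it. Measured against that, your first part is essentially the intended argument and is sound: cutting $S$ along geodesic representatives of the $V_i$ (a complete meridian system for the handlebody) shows that $\sum_i V_i\bullet\gamma_j$ is at least the cyclically reduced length of $W_j$ (you assert equality, which can fail --- a compressible curve can have positive intersection with the $V_i$ --- but only the inequality is needed), so complexity $\le n$ confines the $W_j$ to a finite list, and a curve system with uniformly bounded pairwise intersection numbers has only finitely many types up to homeomorphism of the pair (handlebody, system). The other two parts, however, have genuine gaps.

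For the neoclassical finiteness, your direct circle-counting does not work as stated: the quantity you must bound is the number of points of a single $\widehat G$-orbit of a parabolic fixed point lying on one fixed defining circle $\widehat\alpha_i$, and these arise from intersections of $\widehat\alpha_i$ with infinitely many \emph{distinct} translates $T(\widehat\alpha_k)$; the rule that two circles meet in at most two points therefore yields no bound depending only on $g$. The argument the paper itself sets up is the right one: in a sufficiently complicated configuration two lifts enter and leave a component $\Delta_1$ at the same pair of parabolic fixed points and hence cannot both be circles, so a neoclassical maximal noded Schottky group cannot be sufficiently complicated; Theorem \ref{prop:11} then forces its complexity to be at most $10$, and your Part 1 finishes. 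For the infinitude, you correctly identify but do not close the essential difficulty: $\Xi$ is a minimum over all valid defining sets, i.e.\ over the handlebody mapping class group orbit, and the estimate $\min_h\max_j\sum_i V_i\bullet h^{-1}T_\delta^k(\gamma_j)\to\infty$ is precisely what must be proved, not assumed; the qualitative fact that $T_\delta^k$ lies outside the handlebody group does not by itself give a quantitative lower bound uniform in $h$. In addition, you must verify that the twisted multicurves $T_\delta^k(\gamma_1),\ldots,T_\delta^k(\gamma_{3g-3})$ remain \emph{pinchable}: a Dehn twist acts on $\pi_1(S)$, not on the free group $G$, so one has to check that the images of the twisted curves in $G$ stay nontrivial, are not proper powers, and generate pairwise non-conjugate cyclic subgroups. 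As it stands, the second assertion of the proposition is not established by your argument.
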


%%%%%%%%%%%%%%%%%%%%%%%%
\subsubsection{Sufficiently complicated pinchable sets of geodesics} 
Now, we consider a maximal set of pinchable geodesics in $G$, say 
$\gamma_{1},\ldots,\gamma_{3g-3}$; so $\widehat G$ is a maximal noded Schottky group. Observe that $\widehat G$ is rigid, and that every part of $S^+$ is a sphere with three distinct nodes. Also, every connected component  $\Delta\subset \Omega(\widehat{G})$ is a  Euclidean disc $\Delta$, where $\Delta/\Stab(\Delta)$ is a sphere with three punctures; the three punctures correspond to the three nodes of the corresponding part of $\widehat S^+$. 

Let $V_1,\ldots,V_g$ be a valid set of defining loops on $S$ for the given set of pinchable geodesics (the existence is given by Proposition \ref{existe1}),  and let $\widehat V_1,\ldots,\widehat V_g$ be the corresponding loops on  $\widehat S^+$. For each $i=1,\ldots,g$,  the intersection of a lifting of $\widehat V_i$ with a  component of $\widehat G$ (i.e., a connected component of $\Omega(\widehat{G})$) is  called a {\em strand} of that lifting $\widehat V_i$.  Similarly, the loops $\widehat V_1,\ldots,\widehat V_g$ appear on the corresponding parts of $\widehat S^+$ as collections of {\em strands} connecting the nodes on the boundary of each part. There are two possibilities for these strands; either a strand connects two distinct nodes on some part, or it starts and ends at the same node. Since the loops $V_1,\ldots,V_g$ are simple and disjoint,  there are at most three sets of  {\em parallel} strands of the $\widehat V_i$ in each part; that is, there  are at most three sets of strands, where any two strands in the same set are homotopic  arcs with fixed endpoints at the nodes. We regard each of these sets of strands on a single part as being a {\em superstrand}, so that there are at most $3$ superstrands on any one part.
We next look in some component $\Delta$ of $\Omega(\widehat G)$, and look at a parabolic fixed point $x$ on its boundary, where $x$ corresponds to the node $N$ on the part $S_i$ of $\widehat S^+$. In general, there will be infinitely many liftings of superstrands emanating from $x$ in $\Delta$, but, modulo $\Stab(\Delta)$ there are only finitely many. In fact, there are at most $4$ such liftings of superstrands emanating from $x$. If there is exactly one superstrand on $S_i$ with one endpoint at $N$, and the other endpoint at a different node, then modulo $\Stab(\Delta)$ there will be exactly the one lifting of this superstrand emanating from $x$. If there is only one superstrand on $S_i$ with both endpoints at the same node $N$, then this superstrand has two liftings starting at $x$, one in each direction; so, in this case, we see two lifts of superstrands modulo $\Stab(\Delta)$ emanating from $x$. It follows that, modulo $\Stab(\Delta)$, we can have $0$, $1$, $2$, $3$ or $4$ liftings of superstrands starting at $x$. We note that these liftings of superstrands all end at distinct parabolic fixed points on the boundary of $\Delta$.
We say that the defining set of loops, $\widehat V_1,\ldots,\widehat V_p$ is {\em sufficiently complicated} if there are two (different) lifts $\widehat \alpha_i$ and $\widehat \alpha_j$, of some $\widehat V_i$ and some not necessarily distinct $\widehat V_j$, respectively, so that $\widehat \alpha_i$ and $\widehat \alpha_j$ both pass through the parabolic fixed point $z_1$, into a component $\Delta_1$ of $\widehat G$, then both travel through $\Delta_1$ to the same parabolic fixed point on its boundary, $z_2$, and into another component $\Delta_2$, which they again traverse together to the same boundary point, $z_3$, necessarily a parabolic fixed point, where they enter $\Delta_3$, and they leave $\Delta_{3}$ at different parabolic fixed points.

%%%%%%%%%%%%%%%%%%%%%%%%
\subsection{Sufficiently complicated noded Schottky groups}
A maximal noded Schottky group $\widehat G$ is {\em sufficiently complicated} if every set of valid defining loops on $\widehat S^+$ is sufficiently complicated.
We note that (keeping the notation of last section), inside $\Delta_1$, $\widehat \alpha_i$ and $\widehat \alpha_j$ are disjoint; they both enter $\Delta_1$ at the same point, and they both leave $\Delta_1$  at the same point; hence they cannot both be circles. 
In \cite{H-M} the following result, stating a sufficient condition in terms of the complexity for a maximal noded Schottky group to be sufficiently complicated, was obtained.

\begin{theo}[\cite{H-M}]\label{prop:11} 
If a maximal noded Schottky group $\widehat G$ has 
complexity at least $11$, then it is sufficiently complicated. 
\end{theo}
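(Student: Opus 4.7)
The plan is to prove the contrapositive: assume some valid defining set $V_{1},\ldots,V_{g}$ on $\widehat{S}^{+}$ is \emph{not} sufficiently complicated, and deduce that this set witnesses $\Xi(\gamma_{1},\ldots,\gamma_{3g-3})\le 10$, contradicting the hypothesis. The first step is to translate the failure of sufficient complication into a combinatorial constraint on parallel arcs. Two distinct lifts $\widehat{\alpha}_{i},\widehat{\alpha}_{j}$ are parallel in a component $\Delta_{1}$ precisely when they both enter $\Delta_{1}$ at the same parabolic fixed point $z_{1}$ and exit at the same parabolic fixed point $z_{2}$; their restrictions to $\Delta_{1}$ are then two arcs of a single superstrand-lift at $z_{1}$ of multiplicity at least $2$. ``Not sufficiently complicated'' thus says: for any such parallel pair, the continuations in the adjacent component $\Delta_{2}$ must exit $\Delta_{2}$ at two \emph{distinct} specific parabolic fixed points. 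In particular, the $m$ parallel arcs of a multiplicity-$m$ superstrand at $z_{1}$ in $\Delta_{1}$ must continue in $\Delta_{2}$ to $m$ pairwise distinct specific exits.

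The second step is to combine this with the key geometric fact recalled in the excerpt: modulo $\Stab(\Delta_{2})$ there are at most $4$ liftings of superstrands emanating from any parabolic fixed point on $\partial\Delta_{2}$, and these end at distinct parabolic fixed points. The $m$ parallel arcs in $\Delta_{1}$ enter $\Delta_{2}$ at $m$ cyclically consecutive positions in the cusp of $\Delta_{2}$ at $z_{2}$. Under ``not sufficiently complicated'' these positions must fall into $m$ distinct superstrand-lift classes, so at most $4$ classes are available and $m\le 4$. This applies to every superstrand multiplicity in every component.

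The third step is a refined analysis at each node $N$ to reach the bound of $10$. Because each part of $\widehat{S}^{+}$ is a thrice-punctured sphere, the possible disjoint configurations at $N$ fall into three cases: (a) two arcs from $N$ to the other two nodes, yielding $2$ superstrand-lift classes at a lift $x$ of $N$; (b) one arc plus one loop around the non-connected node, yielding $3$ classes ($1$ arc plus $2$ loop-directions); or (c) two loops around each of the other two nodes, yielding $4$ classes. The $\leq 4$ bound alone gives only $m_{1}+m_{2}+2\ell\le 16$, which is too weak. To sharpen, I would use the cyclic order of the $4$ lift-classes around $x$ on a horocycle at $z_{1}$, and the compatibility constraints imposed by ``not sufficiently complicated'' on the cyclic order at the target parabolic fixed points in each adjacent component. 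A case analysis should show that the extremal configurations where all of $m_{1}$, $m_{2}$, $\ell$ simultaneously attain $4$ force two of the $m$ consecutive entry positions in some adjacent cusp to coincide in their superstrand-lift class, contradicting ``not sufficiently complicated.''

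The main obstacle is this last cyclic/combinatorial step; the precise accounting should give the sharp inequality $m_{1}+m_{2}+2\ell\le 10$ at every node, with the constant $11$ being tight in the sense that extremal configurations of complexity exactly $10$ which are not sufficiently complicated are expected to exist and demarcate the threshold.
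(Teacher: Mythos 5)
This theorem is quoted from \cite{H-M}; the present paper gives no proof of it, so there is nothing internal to compare against, and your proposal has to stand on its own. Its overall shape (contrapositive, then a pigeonhole over superstrand-lift classes at a heavily crossed node) is reasonable, but two essential steps are missing. The first is your ``translation'' of the hypothesis: \emph{sufficiently complicated} asks for two lifts that travel together through \emph{two} consecutive components $\Delta_1,\Delta_2$ and then separate in a \emph{third} component $\Delta_3$, so its negation only says that a pair which has already travelled together through two components must stay together in the next one. Your working hypothesis --- that a pair parallel in one component must separate in the very next one --- does not follow by mere unfolding of the definition. To get it you must iterate the negation around the closed lifts and rule out two distinct lifts remaining parallel through every component they visit; that uses that the $V_i$ are simple, disjoint and homologically independent, so that no two distinct lifts can cobound an invariant annulus in $\Omega^{+}(\widehat G)$. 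Without this auxiliary argument the combinatorial constraint you build on is unjustified.

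The second, and more serious, gap is that the quantitative heart of the theorem --- where any constant like $11$ actually comes from --- is precisely the step you leave as ``a case analysis should show.'' Your bound of at most $4$ strands per superstrand-lift only yields $16$ crossings per node, and you offer no completed argument to improve it. The missing ingredient is an ordering argument: the $n$ strands through a lift $x$ of the node carry a single cyclic order (they arise from $n$ disjoint essential arcs crossing an annular neighbourhood of $\gamma_j$ before pinching, so the orders seen from the two sides of the node agree), and on each side the at most $4$ superstrand-lift classes cut this cyclic order into \emph{consecutive} blocks, because two parallel arcs from $x$ to the same endpoint cobound a bigon that no disjoint arc with a different endpoint can enter. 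The common refinement of two cyclic partitions into at most $4$ intervals each has at most $8$ intervals, so already $n\ge 9$ forces two distinct lifts into the same class on both sides of $x$, i.e.\ a pair travelling together through two consecutive components, after which the first gap's argument finishes the proof. Carrying this out would establish the statement (in fact with room to spare below $11$); as written, your proposal stops just short of the decisive step, and your closing claim that $10$ is the sharp threshold is unsupported speculation.
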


The previous theorem, together Proposition \ref{topologico}, asserts  the existence of infinitely many topologically different sufficiently complicated maximal noded Schottky groups in every rank $g \geq 2$.
The following result states sufficient conditions for a Schottky group to be non-classical.

\begin{theo}[\cite{H-M}] \label{anodado}
Let $\widehat G$ be a maximal noded Schottky group.

\begin{enumerate}
\item If $\widehat G$ is sufficiently complicated, then, for $\tau$ sufficiently large, the Schottky  group $G^{\tau}$ in the relative conical neighborhood of $\widehat G$ is non-classical. 

\item If $S^{+}=\Omega^{+}(\widehat{G})/\widehat{G}$ is the stable Riemann surface as shown in figure \ref{figura1}, then $\widehat{G}$ is sufficiently complicated.
\end{enumerate}
\end{theo}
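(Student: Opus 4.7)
The plan is to treat the two statements separately. Part (2) will reduce to a combinatorial verification allowing us to apply Theorem \ref{prop:11}, while part (1) is a limit/compactness argument that directly exploits the geometric meaning of being sufficiently complicated.

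For part (1), I would argue by contradiction. Suppose there is a sequence $\tau_n\to\infty$ for which each $G^{\tau_n}$ is classical, so that it admits some free set of generators $B_1^n,\ldots,B_g^n$ (a priori not the $A_j^{\tau_n}$) together with a fundamental set of Euclidean circles $C_1^n,{C_1^n}',\ldots,C_g^n,{C_g^n}'$. After fixing a common normalization (say by placing three marked points) and passing to a subsequence, extract Hausdorff/Chabauty limits of both the circles and the generators. Since $G^{\tau_n}\to\widehat G$ geometrically (remark after the infinite-shoebox construction) and the rank is fixed, the limiting elements form a set of geometrical generators for $\widehat G$, while the limiting defining loops are Euclidean circles possibly tangent at parabolic fixed points of $\widehat G$. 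Together they realize a valid defining set on $\widehat S^+$ whose lifts in each component of $\Omega(\widehat G)$ are arcs of Euclidean circles.

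Now apply the sufficiently complicated hypothesis to this valid defining set. It produces two distinct lifts $\widehat\alpha_i$ and $\widehat\alpha_j$ that both enter the component $\Delta_1$ at a parabolic fixed point $z_1$ and leave it at the same point $z_2$. By the previous step, each of these two lifts lies on a Euclidean circle. Two Euclidean circles sharing two distinct points must either coincide or meet transversally. Transverse crossing contradicts the pairwise disjointness of the limit circles, while coincidence contradicts $\widehat\alpha_i\ne\widehat\alpha_j$. The resulting contradiction establishes (1).

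For part (2), I would verify the hypothesis of Theorem \ref{prop:11} directly on the surface of Figure \ref{figura1}. A maximal pinchable set for $g=3$ has $3g-3=6$ geodesics, and the stable Riemann surface $\widehat S^+$ consists of $2g-2=4$ thrice-punctured spheres glued along $6$ nodes according to the dual graph read off from the figure. I would enumerate, up to combinatorial equivalence, all possible valid defining sets $V_1,V_2,V_3$ on $S$ by decomposing each $\widehat V_i$ on $\widehat S^+$ into strands and superstrands on the four spherical parts, using that each part admits at most three superstrands and that the $V_i$, being a set of defining loops for the Schottky group $G$, must descend to half of a symplectic basis in $H_1(S;\mathbb{Z})$. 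In each surviving case, summing strand crossings at the worst node yields $\Xi(\gamma_1,\ldots,\gamma_6;V_1,V_2,V_3)\geq 11$, so $\Xi(\gamma_1,\ldots,\gamma_6)\geq 11$, and Theorem \ref{prop:11} applies.

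The hardest step is the limit argument in part (1): one must prevent portions of the circles $C_j^n$ from escaping along the horocycles at the short geodesics being pinched, and ensure that the surviving limit circles actually match the combinatorial pattern forced by the sufficiently complicated condition. The shoebox normalization together with the uniform convergence $f^{\tau}\to I$ on compact subsets of $\Omega(\widehat G)$ gives the needed control on the non-degenerate parts; the degenerate portions require an explicit model of the opening of each node. For part (2), the main risk is overlooking a subtle valid defining set of low complexity, which a systematic enumeration based on the dual graph and the symplectic-basis constraint should rule out.
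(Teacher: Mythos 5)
The paper itself offers no proof of Theorem \ref{anodado}: it is imported verbatim from \cite{H-M}, so the comparison must be with the argument in that reference, and your proposal has two genuine gaps relative to it. For (1), the compactness argument does not go through as stated. The classical free generators $B_1^n,\ldots,B_g^n$ of $G^{\tau_n}$ are words of a priori unbounded length in the marked generators $A_j^{\tau_n}$, so geometric convergence $G^{\tau_n}\to\widehat G$ gives no convergence of the $B_j^n$; subsequential limits need not generate $\widehat G$, and even if they do they need not be \emph{geometrical} generators (the paper stresses that not every free generating set of a noded Schottky group is geometrical); likewise arcs of the circles $C_j^n$ can escape into the opening cusps. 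More tellingly, your endgame uses only that the two lifts share the two points $z_1,z_2$: that two-point coincidence is exactly the remark in the paper showing that the \emph{limit} group cannot be neoclassical, and it says nothing about the nearby Schottky groups $G^{\tau}$, whose circles pass only \emph{near} the parabolic fixed points. The reason the definition of sufficiently complicated requires the two lifts to travel together through \emph{three} consecutive components $\Delta_1,\Delta_2,\Delta_3$ before separating is precisely that the finite-$\tau$ argument of \cite{H-M} rests on three-point rigidity of circles: two disjoint Euclidean circles passing through the three shrinking thin parts at $z_1,z_2,z_3$ must be uniformly close to each other, and hence cannot exit $\Delta_3$ at parabolic fixed points a definite distance apart. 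A correct proof works directly at large finite $\tau$ with this quantitative statement rather than with a Chabauty limit.

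For (2), reducing to Theorem \ref{prop:11} is the wrong tool: complexity at least $11$ is sufficient but not necessary, and for the configuration of Figure \ref{figura1} the complexity appears to be well below $11$ --- for instance the pinched word $A_1^{-1}A_2A_3^{-1}A_2^{-1}A_1A_3$ meets the three standard handle loops in only six points altogether, so the candidate valid defining sets your enumeration would produce have complexity on the order of $6$. Once the enumeration returns a valid defining set of complexity below $11$, your argument yields nothing. The proof in \cite{H-M} instead verifies the definition of sufficiently complicated directly: it analyses, on the dual graph of the four thrice-punctured spheres, how the superstrands of an arbitrary valid defining set can be routed, and exhibits in every case two lifts that traverse three consecutive components together before separating at the fourth.
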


%%%%%%%%%%%%%%%%%
%%%%%%%%%%%%%%%%%%
\section{Explicit construction of a sufficiently complicated noded Schottky group}\label{Sec:construccion}
In this section we construct explicitly a noded Schottky group as in part (2) of Theorem \ref{anodado}, so part (1) of the same theorem asserts that any Schottky group, sufficiently near to $\widehat{G}$, is necessarily non-classical. 

%%%%%%%%%%%%%%%%
\subsection{A sufficiently complicated noded Schottky groups of rank three}
Let $L_{0}$ be the unit circle, $L_{1}$ be the real line, $L_{2}$ be the line through the points $0$ and $w_{0}=e^{\pi i/3}$, and set
$${\mathcal F}=\left\{(p,r): 0<r<p<1, \; p>1/2, \; r<\frac{\sqrt{1+p^2+p^4}+p^2-1}{\sqrt{3}p} \right\}.$$

For each $(p,r) \in {\mathcal F}$ we set 
$L_{3}$ to be the circle centred at $0$ and radius $r$ and $L_{4}$ to be the circle orthogonal to $L_{0}$,
intersecting $L_{1}$ at the points $p$ and $1/p$ with angle $\pi/3$  (see Figure \ref{figura2}). 
The circle $L_{4}$ is centered at 
$c=\left(\frac{1+p^2}{2p}\right)+\frac{i}{\sqrt{3}}\left(\frac{1-p^2}{2p}\right)$ and 
it has radius $R=\frac{2}{\sqrt{3}}\left(\frac{1-p^2}{2p}\right)$.
The condition $p>1/2$ asserts that $L_{2}$ and $L_{4}$ are disjoint  (tangency occurs when $p=1/2$) and the condition
$r<\frac{\sqrt{1+p^2+p^4}+p^2-1}{\sqrt{3}p}$ asserts that $L_{3}$ and $L_{4}$ are disjoint (tangency occurs when we have an equality). 
Let $\tau_{j}$ be the reflection on $L_{j}$, for $j=0,1,2,3,4$, so
$$\tau_{0}(z)=1/\overline{z}, \quad \tau_{1}(z)=\overline{z}, \quad \tau_{2}(z)=w^2\overline{z}, \quad 
\tau_{3}(z)=r^2/\overline{z}, \quad 
\displaystyle{\tau_{4}(z)=\frac{c\overline{z}-1}{\overline{z}-\overline{c}}},$$
and let $K_{r,p}=\langle \tau_{0}, \tau_{1}, \tau_{2}, \tau_{3}, \tau_{4}\rangle$. It turns out that $K_{r,p}$ is an extended Kleinian group with connected region of discontinuity $\Omega_{r,p}$ and so that 
$\Omega_{r,p}/K_{r,p}$ is a closed disc with $5$ branch values, of orders $2$, $2$, $2$,
$2$ and $3$, on its border. As a consequence of the Klein-Maskit combination theorems \cite{bm:combiv}, the group $K_{r,p}$ has no parabolic transformations, its limit set is a Cantor set and it is geometrically finite.
If we set $W=\tau_{2}\tau_{1}$, $J=\tau_{0}\tau_{1}$ and $L=\tau_{1}\tau_{4}$, then $W^3=L^3=J^2=(WJ)^2=(LJ)^2=1$. Now, if 
$A_{1}=L^{-1}W^{-1}=\tau_{4}\tau_{2}$, $A_{2}=\tau_{1}A_{1}\tau_{1},$ and $A_{3}=\tau_{0}\tau_{3}$, so
$$
A_1(z)=\displaystyle{\frac{c w_{0} z-1}{w_{0} z-\overline{c}}}, \quad 
A_2(z)=\displaystyle{\frac{\overline{c}w_{0}^2z-1}{w_{0}^2z-c}}, \quad 
A_3(z)=r^2z,
$$
then we set $G_{r,p}=\langle A_{1},A_{2},A_{3}\rangle$. In Figure \ref{figura3} we show the situation for values of $p$ near to $1$ and $r$ near to $0$; in which case $G_{r,p}$ turns out to be a classical Schottky group of rank three.

\begin{lemm}
If $(p,r) \in {\mathcal F}$, then $G_{r,p}$ is a Schottky group of rank three.
\end{lemm}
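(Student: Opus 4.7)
The plan is to realize $G_{r,p}$ as a free product $H*\langle A_3\rangle$ via the first Klein--Maskit combination theorem, where $H=\langle A_1,A_2\rangle$ is first shown to be a Fuchsian Schottky group of rank two preserving the unit circle $L_0$.

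First, observe that $\tau_1$, $\tau_2$, $\tau_4$ each preserve $L_0$: $\tau_1$ and $\tau_2$ preserve $|z|$, being reflections in Euclidean lines through the origin, and $\tau_4$ preserves $L_0$ because $L_4\perp L_0$ by construction. Hence $A_1=\tau_4\tau_2$ and $A_2=\tau_1 A_1\tau_1$ preserve $L_0$, so $H$ is a Fuchsian group on the unit disk. The extended reflection group $\langle\tau_1,\tau_2,\tau_4\rangle\subset K_{r,p}$ has a triangular fundamental region in the sector $0\leq\arg z\leq\pi/3$ bounded by arcs of $L_1,L_2,L_4$ with vertex angles $\pi/3$ at $0$ and at $p$, and no third corner (since $L_2\cap L_4=\emptyset$ by the condition $p>1/2$). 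Poincar\'e's polygon theorem therefore yields the presentation
\[
\langle\tau_1,\tau_2,\tau_4\mid\tau_i^2=(\tau_2\tau_1)^3=(\tau_1\tau_4)^3=1\rangle,
\]
whose orientation-preserving subgroup $\langle W,L\rangle$ (with $W=\tau_2\tau_1$ and $L=\tau_1\tau_4$) is isomorphic to $\mathbb{Z}/3*\mathbb{Z}/3$. Since $A_1=L^{-1}W^{-1}$ and $A_2=LW$ are length-$2$ reduced words in this free product, a direct normal-form argument shows that $H=\langle A_1,A_2\rangle$ is free of rank $2$; being torsion-free and containing no parabolic elements (as $K_{r,p}$ has none), it is a purely loxodromic Fuchsian group of rank $2$, i.e.\ a Schottky group of rank $2$ preserving $L_0$.

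Next, $\langle A_3\rangle$ is rank-$1$ Schottky with pair $(D_3,D_3')=(\{|z|>1/r\},\{|z|<r\})$. The bound $r<(\sqrt{1+p^2+p^4}+p^2-1)/(\sqrt{3}p)$ in $\mathcal{F}$ places $L_4$, and by $\tau_1$-symmetry also $\tau_1(L_4)$, strictly inside the annulus $\{r<|z|<1/r\}$. Choosing the four Schottky disks $D_1,D_1',D_2,D_2'$ of $H$ as $\tau_0$-symmetric tubular neighborhoods of four disjoint arcs on $L_0$ (bounded by circles orthogonal to $L_0$) small enough to lie in that same annulus, the six disks $\{D_j,D_j'\}_{j=1,2,3}$ become pairwise disjoint. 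The first Klein--Maskit combination theorem then yields $G_{r,p}\cong H*\langle A_3\rangle$, free of rank $3$, with the six bounding loops as a fundamental set. Together with discreteness, pure loxodromy inherited from $K_{r,p}$, and non-empty region of discontinuity, Maskit's characterization quoted in the introduction implies that $G_{r,p}$ is a Schottky group of rank $3$.

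The main obstacle is the explicit choice of the four $H$-Schottky disks. The natural candidates, namely the isometric circles $L_4=I(A_1^{-1})$ and $\tau_1(L_4)=I(A_2^{-1})$ of common radius $R=(1-p^2)/(\sqrt{3}p)$, have overlapping interiors because the distance between their centres $c$ and $\bar c$ equals $R$. The remedy is the Fuchsian Schottky structure of $H$: as a rank-$2$ free purely loxodromic Fuchsian group, $H$ admits Schottky fundamental domains in the unit disk bounded by arbitrarily thin pairs of geodesics transverse to its axes, and these extend to pairwise disjoint circles orthogonal to $L_0$ in $\hat{\mathbb{C}}$. Verifying that such a configuration can be chosen inside the annulus $\{r<|z|<1/r\}$ --- and is compatible with the isomorphism $A_1\leftrightarrow L^{-1}W^{-1}$, $A_2\leftrightarrow LW$ --- furnishes the disks needed to apply the combination theorem and is the technical heart of the argument.
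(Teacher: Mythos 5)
Your route is genuinely different from the paper's, and its first half is correct: the Poincar\'e-polygon presentation of $\langle\tau_{1},\tau_{2},\tau_{4}\rangle$, the identification $A_{1}=L^{2}W^{2}$, $A_{2}=LW$ inside $\langle W\rangle *\langle L\rangle\cong{\mathbb Z}_{3}*{\mathbb Z}_{3}$, and the normal-form check that $H=\langle A_{1},A_{2}\rangle$ is free of rank two, discrete, purely loxodromic and of the second kind are all sound. For comparison, the paper does none of this: it notes that $G_{r,p}$ is a finite-index normal subgroup of $K_{r,p}$ with quotient ${\mathbb Z}_{2}\oplus D_{3}$, so it is a geometrically finite function group with the same connected region of discontinuity and no parabolics; torsion-freeness follows because every elliptic of $K_{r,p}$ survives with the same order in the quotient; and Maskit's classification of function groups then identifies $G_{r,p}$ as a rank-three Schottky group. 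That argument requires no explicit fundamental loops at all, which is exactly where your proof runs into trouble.

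The gap is the step you yourself label ``the technical heart'': producing four pairwise disjoint Jordan curves that form a fundamental set of loops for $H$ \emph{with respect to the specific generators} $A_{1},A_{2}$ and that lie inside the annulus $\{r<|z|<1/r\}$ with $\{|z|\le r\}$ and $\{|z|\ge 1/r\}$ in their common exterior. This is precisely the hypothesis of the first Klein--Maskit combination theorem, and without it neither the freeness of $G_{r,p}$ nor the rank count is established; the statement of the lemma is therefore not proved. Your proposed remedy does not close it. Defining geodesics of a Fuchsian Schottky group cannot be made ``arbitrarily thin'': each must enclose the corresponding block of the limit set, whose angular extent on $L_{0}$ is dictated by the group, not by you. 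Moreover, Button's theorem gives classicality of $H$ on \emph{some} generating set, while Purzitsky's result (classicality on every generating set) applies only when the convex core of $\Delta/H$ is a one-holed torus; here the boundary element of the orbifold $\Delta/\langle W,L\rangle$ is $WL=\tau_{2}\tau_{4}$, which lies in the index-three kernel $H$, so the convex core of $\Delta/H$ is a three-holed sphere and that result is unavailable. Chuckrow's theorem does guarantee that $A_{1},A_{2}$ admit \emph{some} fundamental set of Jordan loops; what you must still prove is that these loops can be pushed off the two disks $\{|z|\le r\}$ and $\{|z|\ge 1/r\}$ while keeping $0$ and $\infty$ in the common exterior. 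That is plausible (it is what Figure \ref{figura3} depicts), but it is the entire content of the combination step and it is missing.
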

\begin{proof}
It can be seen that $G_{r,p}$ is a finite index normal subgroup of $K_{r,p}$ and
$$K_{r,p}/G_{r,p}=\langle \tau_{0} : \tau_{0}^2=1\rangle \times \langle \tau_{1},\tau_{2} :
\tau_{1}^2=\tau_{2}^2=(\tau_{2}\tau_{1})^3=1\rangle \cong {\mathbb Z}_{2} \oplus D_{3}.
$$

In particular, $G_{r,p}$ has the same region of discontinuity as for $K_{r,p}$ (so a function group), it is geometrically finite and  does not have parabolic transformations. As any of the elliptic transformations of $K_{r,p}$ goes into an element of the same order in the quotient $K_{r,p}/G_{r,p}$, we also have that $G_{r,p}$ is torsion free. Now, as a consequence of the classification of function groups \cite{Maskit:function3,Maskit:function4}, the group $G_{r,p}$ is a Schottky group of rank three (in Figure \ref{figura3} there is shown a set of fundamental loops). 
\end{proof}

The closed Riemann surface $S_{r,p}=\Omega_{r,p}/G_{r,p}$ of genus $3$ admits the group ${\mathbb Z}_{2} \oplus D_{3}$ as a group of conformal/anticonformal automorphisms. On $S_{r,p}$ we have simple closed curves
$\gamma_{1}$,..., $\gamma_{6}$ which are pinchable (see Figures \ref{figura4} and \ref{figura5}) with respect to the Schottky group $G_{r,p}$; these pinchable curves correspond to the conjugacy classes of cyclic groups of $G_{r,p}$ as follows:
$$
\begin{array}{l}
\mbox{$\gamma_{1}$ corresponds to $A_{2}^{-1}$; $\gamma_{2}$ corresponds to $A_{1}$; $\gamma_{3}$ corresponds to $A_{1}^{-1}A_{2}$;}\\
\mbox{$\gamma_{4}$ corresponds to $A_{1}^{-1}A_{2}A_{3}^{-1}A_{2}^{-1}A_{1}A_{3}$; $\gamma_{5}$ corresponds to $A_{2}^{-1}A_{3}^{-1}A_{2}A_{3}$;}\\
\mbox{$\gamma_{6}$ corresponds to $A_{1}A_{3}^{-1}A_{1}^{-1}A_{3}$.}
\end{array}
$$

\begin{figure}[!tbp]
  \centering
  \begin{minipage}[b]{0.49\textwidth}
  \centering
    \includegraphics[width=4cm]{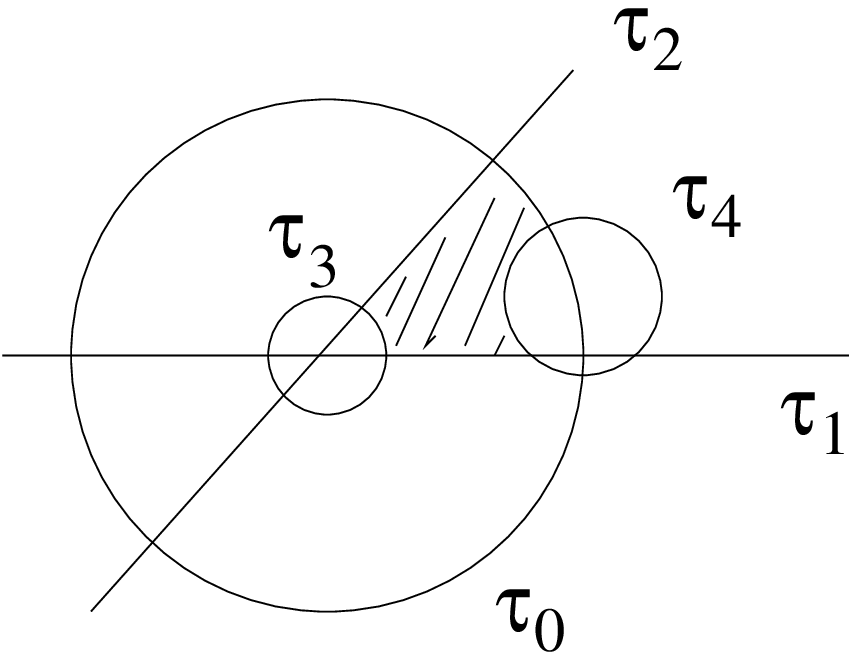}
    \caption{A set of lines and circles}
    \label{figura2}
  \end{minipage}
  \hfill
  \begin{minipage}[b]{0.49\textwidth}
  \centering
    \includegraphics[width=4cm]{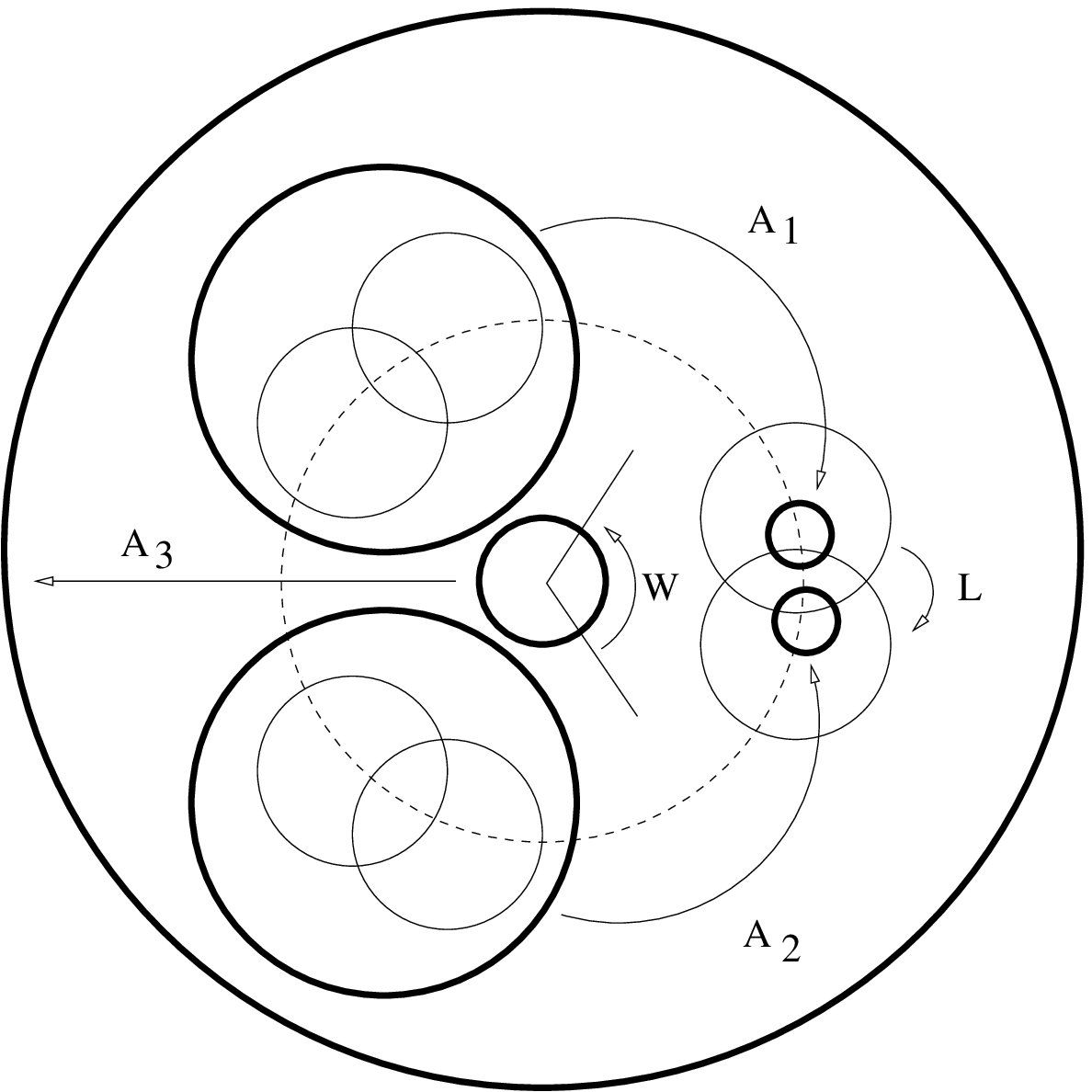}
    \caption{The Schottky group $G_{r,p}$ of rank $3$: the six darkest loops are a set of fundamental loops}
    \label{figura3}
  \end{minipage}
\end{figure}

\begin{figure}[!tbp]
  \centering
  \begin{minipage}[b]{0.48\textwidth}
  \centering
    \includegraphics[width=4.8cm]{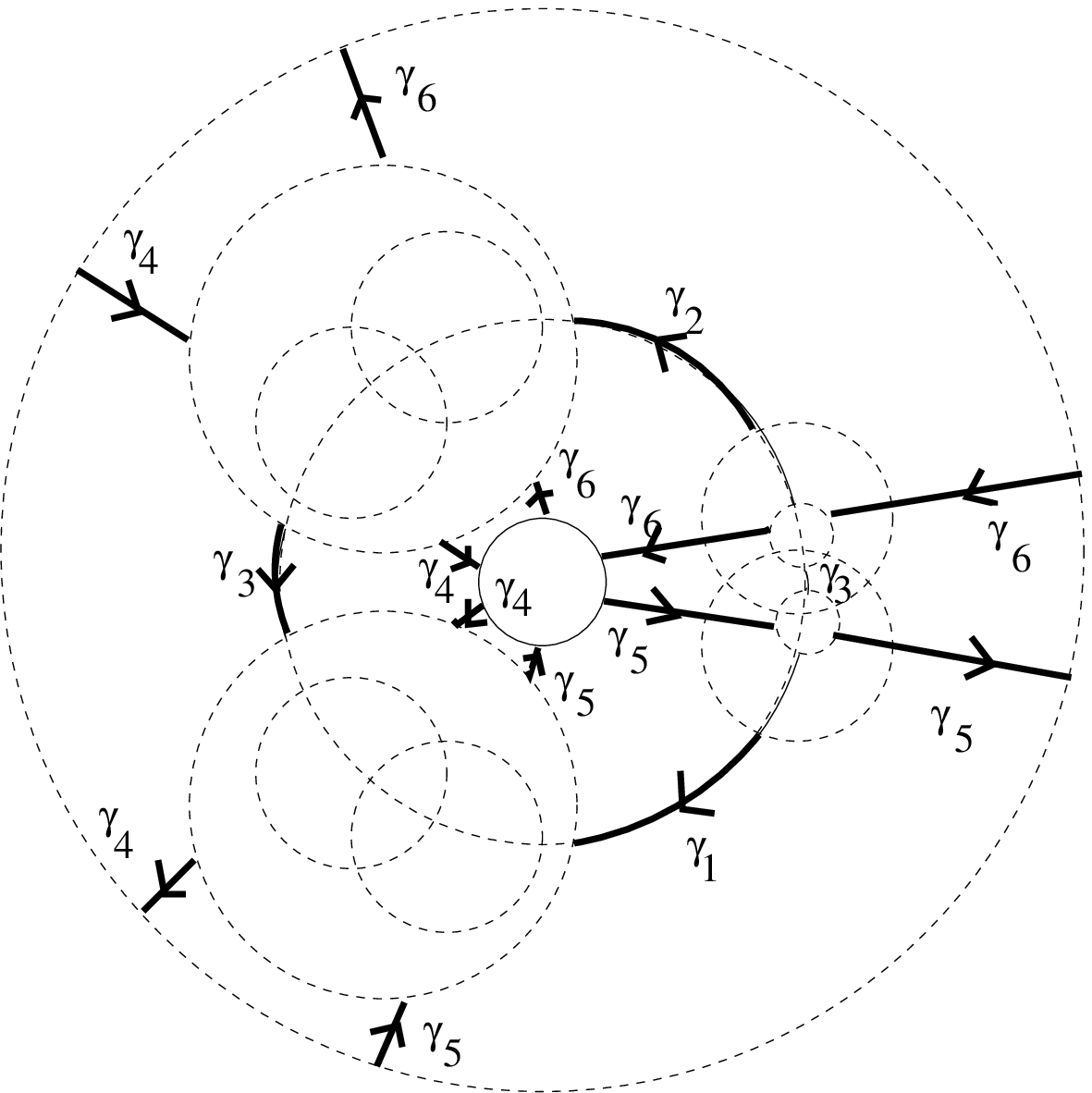}
    \caption{A set of pinchable curves seen at the Schottky uniformization}
    \label{figura4}
  \end{minipage}
  \hfill
  \begin{minipage}[b]{0.48\textwidth}
  \centering
    \includegraphics[width=4cm]{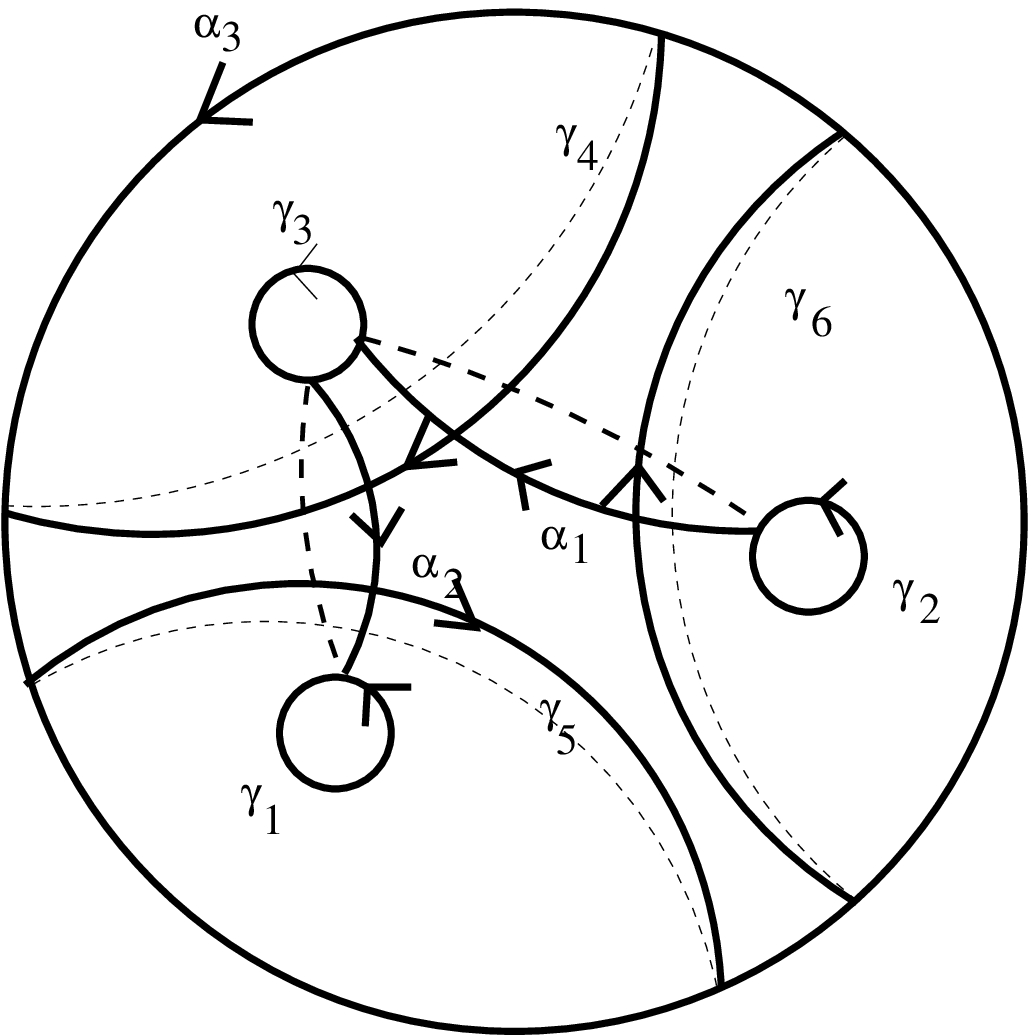}
    \caption{A set of pinchable curves seen on the Riemann surface $S_{r,p}$}
    \label{figura5}
  \end{minipage}
\end{figure}

To obtain the desired noded Schottky group, we need to move the parameters $p$ and $r$ in order to have that the loxodromic transformations 
$A_{2}^{-1}$, $A_{1}$, $A_{1}^{-1}A_{2}$, $A_{1}^{-1}A_{2}A_{3}^{-1}A_{2}^{-1}A_{1}A_{3}$, $A_{2}^{-1}A_{3}^{-1}A_{2}A_{3}$ and $A_{1}A_{3}^{-1}A_{1}^{-1}A_{3}$ are transformed into parabolic transformations.
As the order three automorphism $W$ permutes cyclically $\gamma_{1},\gamma_{2},\gamma_{3}$ and also
$\gamma_{4},\gamma_{5},\gamma_{6}$, we only need to take care of $A_{1}$ and $A_{1}A_{3}^{-1}A_{1}^{-1}A_{3}$.

First, in order to make $A_{1}$ a parabolic we only need to have $\tau_{4}(w_{0})=w_{0}$, equivalently, that the circle $L_{4}$ is
tangent to the line $L_{2}$ at $w_{0}$. This happens exactly when $p=1/2$. Now, assuming $p=1/2$, in order to make $A_{1}A_{3}^{-1}A_{1}^{-1}A_{3}$ parabolic we only need to have tangency
of the circle $L_{3}$ with $L_{4}$, that is,
$r=r^{*}=\frac{\sqrt{7}-\sqrt{3}}{2}$.
The group $G_{r^{*},1/2}$ turns out to be a noded Schottky group that uniformizes a stable Riemann surface as shown in Figure \ref{figura1} and, by (2) in Theorem \ref{anodado}, it is a sufficiently complicated noded Schottky group. 

\subsection{Non-classical Schottky groups of rank three}
By (1) in Theorem \ref{anodado},  there exist $(p_{0},r_{0}) \in {\mathcal F}$  with the property that if $(p,r) \in {\mathcal F}$, 
$1/2<p<p_{0}$ and $r_{0}<r<r^{*}$, then $G_{r,p}$ is a non-classical Schottky group of rank three. 
Moreover, each of these Schottky groups is contained in a Kleinian group $K_{r,p}$ as a finite index normal subgroup with
$K_{r,p}/G_{r,p} \cong {\mathbb Z}_{2} \oplus D_{3}$, in other words, the closed Riemann surfaces $S_{r,p}=\Omega(G_{r,p})/G_{r,p}$ admit a group of conformal automorphisms isomorphic to ${\mathbb Z}_{2} \oplus D_{3}$. The family of these Riemann surfaces degenerates to a stable Riemann surface $S_{r^{*},1/2}$ as Figure \ref{figura1} keeping the above group of automorphisms invariant.

%%%%%%%%%%%%%%%%%%%
%%%%%%%%%%%%%%%%%%%


\begin{thebibliography}{99}

\bibitem{Bobenko}
A. I. Bobenko.
Schottky uniformization and finite-gap integration,
{\it Soviet Math. Dokl.} {\bf 36} No. 1 (1988), 38--42 (transl. from Russian:
{\it Dokl. Akad. Nauk SSSR}, {\bf 295}, No.2 (1987)).


\bibitem{Button}
J. Button.
All Fuchsian Schottky groups are classical Schottky groups.
{\it Geometry \& Topology Monographs}.
Volume 1: The Epstein birthday schrift (1998), 117--125.

\bibitem{Chuckrow}
V. Chuckrow.
Schottky groups and limits of Kleinian groups.
{\it Bull. Amer. Math. Soc.} {\bf 73} No. 1 (1967), 139--141.

\bibitem{Hidalgo:NodedSchottky}
R. A. Hidalgo.
The noded {S}chottky space.
{\it London Math. Soc.} {\bf 73} (1996), 385--403.

\bibitem{Hidalgo:NodedFuchsian}
R. A . Hidalgo.
Noded Fuchsian groups.
{\it Complex Variables} {\bf 36} (1998), 45--66.

\bibitem{Hidalgo:classical}
R. A. Hidalgo.
Towards a proof of the classical Schottky uniformization conjecture.
https://arxiv.org/pdf/1709.09515.pdf


\bibitem{H-M}
R. A. Hidalgo and B. Maskit.
On neoclassical Schottky groups.
{\it Trans. of the Amer. Math. Soc.} {\bf 358} (2006), 4765--4792.


\bibitem{Hou1}
Y. Hou. 
On smooth moduli space of Riemann surfaces.
(2016).\\ https://arxiv.org/pdf/1610.03132.pdf

\bibitem{Hou2}
Y. Hou. 
The classification of Kleinian groups of Hausdorff dimensions at most one.
(2016). \\  https://arxiv.org/pdf/1610.03046.pdf



\bibitem{J-M:geoconv}
T. J{\o}rgensen and A. Marden.
Algebraic and geometric convergence of Kleinian groups.
{\it Math. Scand.} {\bf 66} (1990), 47--72.

\bibitem{Koebe}  
P. Koebe.  
\"{U}ber die Uniformisierung der Algebraischen Kurven II.
{\it Math. Ann.} {\bf 69} (1910), 1--81.



\bibitem{K-M:pinched}
I. Kra and B. Maskit.
Pinched two component Kleinian groups.
In {\it Analysis and Topology}, pages 425--465. World Scientific Press, 1998.

\bibitem{McMullen}
C. McMullen.
{\it Complex Dynamics and Renormalization}. 
Annals of Mathematical Studies {\bf 135}, Princeton University Press, (1984).




\bibitem{Marden}
A. Marden.
Schottky groups and circles.
In {\it Contributions to Analysis (a collection of papers dedicated to Lipman Bers)}, 273--278, Academic Press, 1974.

\bibitem{Maskit:function3}
B. Maskit. 
On the classification of Kleinian Groups I. Koebe groups. 
{\it Acta Math.} {\bf 135} (1975), 249--270.


\bibitem{Maskit:function4}
B. Maskit. 
On the classification of Kleinian Groups II. Signatures. 
{\it Acta Math.} {\bf 138} (1976), 17--42.

\bibitem{Maskit:Schottky}
B. Maskit.
A characterization of Schottky groups.
{\it J. Analyse Math. } {\bf 19} (1967), 227--230.


\bibitem{Maskit:conj}
B. Maskit.
Remarks on m-symmetric Riemann surfaces.
{\it Contemporary Math.} {\bf 211} (1997), 433--445.

\bibitem{bm:combiv}
B. Maskit.
On Klein's combination theorem IV.
{\it Trans. Amer. Math. Soc.} {\bf 336} (1993),265--294.

\bibitem{bm:free}
B. Maskit.
On free Kleinian groups.
{\it Duke Math. J.} {\bf 48} (1981),755--765.

\bibitem{bm:parelt}
B. Maskit.
Parabolic elements in Kleinian groups.
{\it Annals of Math.} {\bf 117} (1983), 659--668.

\bibitem{P}
N. Purzitsky.
Two-Generator  Discrete  Free  Products.
{\it Math. Z.} {\bf 126} (1972), 209--223.

\bibitem{Sato}
H. Sato.
On a paper of Zarrow.
{\it Duke Math. J.} {\bf 57} (1988), 205--209.


\bibitem{Seppala}
M. Sepp\"al\"a.
Myrberg's numerical uniformization of hyperelliptic curves.
{\it Ann. Acad. Scie. Fenn. Math.} {\bf 29} (2004), 3--20.


\bibitem{Williams}
J. P. Williams.
Classical and non-classical Schottky groups.
Doctoral Thesis, University of Southampton, School of Mathematics (2009), 125pp.

\bibitem{yamamoto:parelt}
Hiro-o Yamamoto.
Sqeezing deformations in Schottky spaces.
{\it J. Math. Soc. Japan} {\bf 31} (1979), 227--243.


\bibitem{Yamamoto}
Hiro-o Yamamoto.
An example of a non-classical Schottky group.
{\it Duke Math. J.} {\bf 63} (1991), 193--197.

\bibitem{Zarrow}
R. Zarrow.
Classical and non-classical Schottky groups.
{\it Duke Math. J.} {\bf 42} (1975), 717--724.

\end{thebibliography}
\end{document}